\documentclass{amsart}

\usepackage{graphicx}
\usepackage{comment}
\usepackage{mathrsfs}
\usepackage{amsmath}
\usepackage{amssymb}
\usepackage{amsthm}

\newtheorem{theorem}{Theorem}[section]
\newtheorem{lemma}[theorem]{Lemma}
\newtheorem{clm}[theorem]{Claim}

\theoremstyle{definition}

\newtheorem{example}[theorem]{Example}
\newtheorem*{question}{Question}

\newtheorem{remark}{Remark}

\numberwithin{equation}{section}



\begin{document}

\title[Riemann surfaces given by generalized Cantor sets]{On the length spectrums of Riemann surfaces given by generalized Cantor sets}

\author{Erina Kinjo}
\address{Department of Mechanical Engineering,  
Ehime University, 3 Bunkyo-cho, Matsuyama, Ehime 790-8577, Japan}
\email{kinjo.erina.ax@ehime-u.ac.jp}


\subjclass[2020]{Primary 30F60; Secondary 32G15}
\date{September 14, 2023, }


\keywords{Riemann surface of infinite type, Teichm\"uller space, Generalized Cantor set, length spectrum}

\begin{abstract}
For a generalized Cantor set $E(\omega)$ with respect to a sequence $\omega=\{ q_n \}_{n=1}^{\infty} \subset (0,1)$, we consider Riemann surface $X_{E(\omega)}:=\hat{\mathbb{C}} \setminus E(\omega)$ and metrics on Teichm\"uller space $T(X_{E(\omega)})$ of $X_{E(\omega)}$. If $E(\omega) = \mathcal{C}$ ( the middle one-third Cantor set), we find that on $T(X_{\mathcal{C}})$, Teichm\"uller metric $d_T$ defines the same topology as that of the length spectrum metric $d_L$.  Also, we can easily check that $d_T$ does not define the same topology as that of $d_L$ on $T(X_{E(\omega)})$ if $\sup q_n =1$. On the other hand, it is not easy to judge whether the metrics define the same topology or not if $\inf q_n =0$. In this paper, we show that the two metrics define different topologies on $T(X_{E(\omega)})$ for some $\omega=\{ q_n \}_{n=1}^{\infty}$ such that $\inf q_n =0$.
\end{abstract}

\maketitle



\section{Introduction}





For a Riemann surface  $X$, its Teichm\"uller space $T(X)$ is a set of Teichm\"uller equivalence classes $\{ [R,f] \}$, where two pairs $(R,f)$, $(S,g)$ of Riemann surfaces $R,S$ and quasiconformal mappings $f: X \to R$,  $g: X \to S$ are Teichm\"uller equivalent if there exists a conformal mapping $h$ from $R$ to $S$ such that  $g^{-1} \circ h\circ f$ is homotopic to the identity mapping, here the homotopy keeps each point of ideal boundary $\partial X$ fixed. On $T(X$), some metrics are defined. The Teichm\"uller metric $d_T$ measures how different conformal  structures of Riemann surfaces in $T(X)$ are. On the other hand, the length spectrum  metric $d_L$ measures how different hyperbolic  structures of Riemann surfaces in $T(X)$ are. More precisely, it is defined as follows: for any hyperbolic Riemann surface $X$, let $\mathscr{C} (X)$ be a set of non-trivial and non-peripheral simple closed curves in $X$, $[\alpha]$ be the geodesic freely homotopic to $\alpha \in \mathscr{C} (X)$ and  $\ell_{X} (\alpha)$ be the hyperbolic length of $\alpha \in \mathscr{C} (X)$. For any two points $[X_1,f_1],[X_2,f_2] \in T(X)$, $d_L$ is defined by
\[ d_L ([X_1 ,f_1],[X_2 , f_2]) := \log \sup_{\alpha \in \mathscr{C} (X)}  \max \left\{ \frac{\ell_{X_1}  ([f_1 (\alpha)])}{\ell_{X_2} ([f_2 (\alpha)])},  \frac{\ell_{X_2}  ([f_2 (\alpha)])}{\ell_{X_1} ([f_1 (\alpha)])} \right\}.\]
By the definition, $d_L ([X_1 ,f_1],[X_2 , f_2]) = 0$ if and only if $\ell_{X_1} ([f_1 (\alpha)]) = \ell_{X_2} ([f_2 (\alpha)])$ for any $\alpha  \in \mathscr{C} (X)$. It is known that for any hyperbolic Riemann surface $X$ and any two points $p,q \in T(X)$,
\[ d_L(p,q) \le d_T(p,q)\]
holds (cf. \cite{Sorvali} or \cite{Wolpert}). Therefore, $d_T$ and $d_L$ define the same topology on $T(X)$ if and only if for any sequence $\{ p_n\} \subset T(X)$ such that $d_L (p_n,p_0)$ converges to zero, $d_T (p_n,p_0)$ converges to zero as $n \to \infty$. Liu (\cite{Liu}) showed that the two metrics define the same topology on $T(X)$ if $X$ is a Riemann surface of finite type (i.e. a compact surface from which at most finitely many points are removed). On the other hand, Shiga (\cite{Shiga1}) considered the metrics on reduced Teichm\"uller space $T^{\sharp}(X)$ of Riemann surface $X$ of infinite type, here the reduced one is defined by a homotopy which does not necessarily keep points of ideal boundary fixed. He gave an example of the Riemann surface $X$ such that they define different topologies on $T^{\sharp}(X)$. Also, he showed that they define the same topology on $T^{\sharp}(X)$ if $X$ admits a bounded pants decomposition, that is, $X$ has a constant $M > 0$ and a pants decomposition $\bigcup_{k=1}^{\infty} P_k$ of the convex core of $X$ such that for any $k \in \mathbb{N}$, each connected component of $\partial P_k$ is either a simple closed geodesic $\alpha_k$ satisfying $0<1/M < \ell_{X} (\alpha_k) < M$ or a puncture. After that Liu-Sun-Wei (\cite{Liu-Sun-Wei}) and Kinjo (\cite{Kinjo1}, \cite{Kinjo2}, \cite{Kinjo3}) gave sufficient conditions for the two metrics to define the same topology or different ones. In particular, in Kinjo\cite{Kinjo3}, it is shown that if a Riemann surface $X$ has bounded geometry (that is, injective radii of all points of $X \setminus \{$cusp neighborhoods$\}$ are uniformly bounded from above and below), then the two metrics define the same topology.

In this paper, we consider a Riemann surface $X_{E(\omega)}$ of infinite type given by removing a generalized Cantor set $E(\omega)$ from the Riemann sphere $\hat{\mathbb{C}}$, i.e. $X_{E(\omega)} := \hat{\mathbb{C}} \setminus E(\omega)$. A generalized Cantor $E(\omega)$ set is defined as follows.

Let $\omega = \{ q_n \}_{n=1}^{\infty} \subset (0,1)$ be a sequence. Firstly, remove an open interval  with the length $q_1$ from the closed interval $I:=[0,1] \subset \mathbb{R}$ so that the remaining closed intervals $\{I_1^1, I_1^2\}$ in $I$ have the same length. Secondly, remove an open interval with the length $q_2 |I_1^1|$ (here $| \cdot |$ means the length of the interval) from each closed interval $I_1^i$ $(i=1,2)$ so that the remaining closed intervals \{$I_2^i\}_{i=1}^4$ in $I$ have the same length (Figure \ref{can}). Inductively, continue to remove an open interval with the length $q_n |I_{n-1}^1|$ from each closed interval $I_{n-1}^i$ ($i =1,2,3,...,2^{n-1}$) so that the remaining closed intervals  \{$I_n^i\}_{i \in \mathcal{I}_n}$ ($\mathcal{I}_n :=\{ 1,2,3,...,2^n \}$) in $I$ have the same length. Put $E_k:= \bigcup_{i \in \mathcal{I}_k} I_k^i$ for each $k \in \mathbb{N}$ and define $E(\omega) := \bigcap_{k=1}^{\infty}  E_k$. We call $E(\omega)$ the generalized Cantor set for $\omega = \{ q_n \}_{n=1}^{\infty}$.

\begin{figure}[h]
\centering
\includegraphics[width=7.5 cm,bb=0 0 580 178]{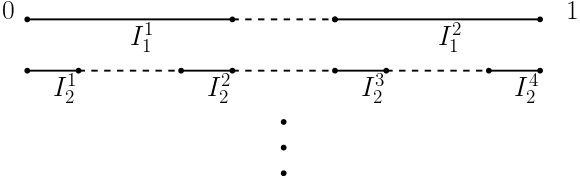}\\
\caption{$E_k = \bigcup_{i \in \mathcal{I}_k} I_k^i$ $(k=1,2).$ }
\label{can}
\end{figure}   

Now, let $\mathcal{C}$ be the middle-third Cantor set (i.e. the generalized Cantor set for $\omega = \{ q_n = \frac{1}{3} \mid n \in \mathbb{N} \}$) and put $X_\mathcal{C} := \hat{\mathbb{C}}\setminus \mathcal{C}$. Recently, Shiga (\cite{Shiga2}, \cite{Shiga3}) gives some results on Riemann surfaces given by generalized Cantor set. Theorem I in \cite{Shiga2} states that $X_\mathcal{C}$ is quasiconformally equivalent to $X_\mathcal{J} := \hat{\mathbb{C}}\setminus \mathcal{J}$ for the Julia set $\mathcal{J}$ of  some rational function. From the proof, we find that $X_\mathcal{C}$ admits a bounded pants decomposition. (We explain how to decompose $X_\mathcal{C}$ (or more precisely $X_{E(\omega)}$) in Section 2.) If $X_{E(\omega)}$ is qusiconformally equivalent to $X_{\mathcal{C}}$, then $X_{E(\omega)}$ admits a bounded pants decomposition by Wolpert's lemma (\cite{Wolpert}). Therefore, if $X_{E(\omega)}$ is qusiconformally equivalent to $X_{\mathcal{C}}$, then the Teichm\"uller metric $d_T$ and the length spectrum metric $d_L$ define the same topology on $T(X_{E(\omega)})$. Our question is whether the converse holds:

\begin{question}[1]
If the two metrics define the same topology on $T(X_{E(\omega)})$, is $X_{E(\omega)}$ quasiconformally equivalent to $X_{\mathcal{C}}$ ? (In other words,  if $X_{E(\omega)}$ is not quasiconformally equivalent to $X_{\mathcal{C}}$, do the two metrics define different topologies on $T(X_{E(\omega)})$ ?)

 \end{question}

Let us introduce new notations. For any $\omega= \{ q_n \}_{n=1}^{\infty} \subset (0,1)$, $\delta \in (0,1)$ and $i \in \mathbb{N}$, put $\omega(\delta ; i) :=\inf \{ k \in \mathbb{N} \mid q_{i+k} \ge \delta \}$, and define $\displaystyle N(\omega, \delta):=\sup_{i \in \mathbb{N}} \omega(\delta ; i)$. Note that if $N(\omega; \delta) = \infty$ for any $\delta \in (0,1)$, then $\inf q_n =0$. Indeed, if $\inf q_n > c$ for some constant $c>0$, then $\omega (\delta ,i) =1$ for any $\delta \in (0,c/2)$ and $i \in \mathbb{N}$, so  $N(\omega; \delta) =1 < \infty$. However, the converse does not hold in general. For example, for the sequence $\omega =\{q_n \}_{n=1}^{\infty}$ defined by
\[
 q_n =
 \begin{cases}
 \frac{2}{3} & (n=2m-1; m \in \mathbb{N})\\
 (\frac{1}{2})^n & (n=2m; m \in \mathbb{N}),
 \end{cases}
 \]
$\inf q_n =0$ and $N(\omega; \delta) =2 < \infty$ for any $\delta \in (0,2/3]$.

By Theorem II in \cite{Shiga3}, $X_{E(\omega)}$ is not qusiconformally equivalent to $X_{\mathcal{C}}$ if and only if $\sup q_n = 1$ or  $N(\omega; \delta) = \infty$ for any $\delta \in (0,1)$. (This is a special case of a result of MacManus (\cite{MacManus}).) Therefore, Question (1)  is rephrased as follows:

 \begin{question}[1']
 If $\sup q_n = 1$ or $N(\omega; \delta) = \infty$ for any $\delta \in (0,1)$, do the two metrics define different topologies on $T(X_{E(\omega)})$ ?
\end{question}

We find that they define different topologies on $T(X_{E(\omega)})$ if $\sup q_n = 1$. Indeed, in Shiga's paper \cite{Shiga2}, he proved that if $\sup q_n = 1$, then $X_{E(\omega)}$ is not quasiconformally equivalent to $X_{\mathcal{C}}$ by showing that under the assumption, there exists a family of simple closed geodesics $\{ \gamma_k \}_{k \in \mathbb{N}}$ such that $\ell_{X_{E(\omega)}} (\gamma_k) \to 0$ $(k \to \infty)$. On the other hand, Liu-Sun-Wei (\cite{Liu-Sun-Wei}) showed that if a  hyperbolic Riemann surface $X$ has a family of simple closed geodesics $\{ \gamma_n \}$ such that $\lim_{n \to \infty} \ell_X (\gamma_n) =0$, then the two metrics define different topologies on $T(X)$. Hence, we consider the following:

\begin{question}[2]
 If $N(\omega; \delta) = \infty$ for any $\delta \in (0,1)$, do the two metrics define different topologies on $T(X_{E(\omega)})$ ?
\end{question} 

There are two cases where $N(\omega; \delta) = \infty$ for any $\delta \in (0,1)$: in the first case, for any  $\delta \in (0,1)$, there exists $i \in \mathbb{N}$ such that $\omega (\delta, i) = \infty$. For example, let $\omega =\{q_n \}_{n=1}^{\infty}$ be a sequence which is monotonic decreasing and converges to zero as $n \to \infty$. Then, for any  $\delta \in (0,1)$, there exists $n_0 \in \mathbb{N}$ such that $q_n < \delta$ for any $n > n_0$, hence $\omega (\delta, i) = \infty$ for any $i > n_0$. In the second case, for some $\delta \in (0,1)$, $\omega (\delta, i) < \infty$ for any $i \in \mathbb{N}$. For example, let $\omega =\{q_n \}_{n=1}^{\infty}$ be the sequence defined by
\[
 q_n =
 \begin{cases}
 \frac{2}{3} & (n=2^m; m \in \mathbb{N})\\
 (\frac{1}{2})^n & (\text{otherwise}).
 \end{cases}
 \]
For any $i \in \mathbb{N}$, there exists $m \in \mathbb{N}$ such that $2^{m-1} \le i < 2^m$, hence for any $\displaystyle \delta \in (0, \frac{2}{3}]$ and $i \in \mathbb{N}$, $\omega(\delta ; i) =\inf \{ k \in \mathbb{N} \mid q_{i+k} \ge \delta \} \le 2^{m} - 2^{m-1} = 2^{m-1} < \infty$. (On the other hand,  if $\displaystyle \frac{2}{3} < \delta <1 $, $\omega(\delta ; i) =\infty$ for any  $i \in \mathbb{N}$, therefore   $N(\omega; \delta) = \infty$ for any $\delta \in (0,1)$.)

To solve our Question in general is very difficult, so we prove that it is true under some assumptions in the first case and in the  second case, respectively.
 
 \begin{theorem}\label{thm1} 
 Suppose that either a condition $\rm{(I)}$ or a condition $\rm{(II)}$ is satisified:
\begin{enumerate}
 \item[(I)] (The first case.) A sequence $\omega = \{ q_n\}_{n=1}^{\infty} \subset (0,1)$ is monotonic decreasing and converges to $0$ such that
 \[ q_n \log(\log (1/q_{n+1})) \to \infty\ (n \to \infty).\]
\item[(II)] (The second case.)
 For  $\omega = \{ q_n\}_{n=1}^{\infty} \subset (0,1)$, there exist sequences  $\{p_n\}_{n=1}^{\infty} \subset (0,1)$, $\mathcal{A}:=\{ a_m\}_{m=1}^{\infty} \subset \mathbb{N}$ and a constant $d \in (0,1)$ such that 
 
 \begin{enumerate}
 \item[(1)] $\{ p_n \}$ is monotonic decreasing, and converges to $0$ $(n \to \infty)$, 
 \item[(2)]  $0 < a_{m+1} - a_m \to \infty$ $(m \to \infty)$,
 \item[(3)] 
 \[ \lim_{m \to \infty} \sum_{n=a_m +1}^{a_{m+1}} \exp \left(\frac{-\pi^2}{2p_n}\right) = \infty\]
 and
 \item[(4)]   
 \[
 q_n =
 \begin{cases}
 d & (n \in \mathcal{A})\\
 p_n & (\text{otherwise}).
 \end{cases}
 \]
 \end{enumerate}
 \end{enumerate}
Then the two metrics $d_T$ and $d_L$ define different topologies on $T(X_{E(\omega)})$.
 \end{theorem}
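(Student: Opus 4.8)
The plan is to exhibit, under either hypothesis, a sequence $\{p_m\}\subset T(X_{E(\omega)})$ with $d_L(p_m,p_0)\to 0$ but $\inf_m d_T(p_m,p_0)>0$, where $p_0=[X_{E(\omega)},\mathrm{id}]$; since $d_L\le d_T$ always, the criterion recalled above then shows that $d_T$ and $d_L$ induce different topologies. Each $p_m$ will be represented by a quasiconformal deformation $f_m$ supported in a region of $X_{E(\omega)}$ that is ``deep in the surface'' (indexed directly by $n$, in case (I)) or is the subsurface lying over the $m$-th long block of small $q_n$'s between two consecutive occurrences of the constant value $d$ (in case (II)). As preparation I would recall the canonical decomposition of $X_{E(\omega)}$ coming from the construction of $E(\omega)$: one annulus together with a trivalent tree of pairs of pants whose cuffs are the curves $\gamma_n^i$ encircling the intervals $I_n^i$, and I would record the quantitative fact — implicit in Shiga's and MacManus's work — that $\gamma_n^i$ becomes a long geodesic exactly when $q_n$ is small, the conformal modulus of its collar (equivalently, up to bounded factors, $e^{-\ell_{X_{E(\omega)}}(\gamma_n^i)}$) being bounded above and below by an explicit function of $q_n$, obtained from standard two–sided estimates for moduli of ring domains whose complementary continua are the relevant Cantor pieces. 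The quantities $\log\log(1/q_{n+1})$ in (I) and $\exp(-\pi^2/(2p_n))$ in (II) are meant to be the sharp form of this estimate and of the way the collar widths accumulate along a branch of the tree.

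For the construction in case (II) I would, for each run $a_m<n\le a_{m+1}$, pick along a fixed branch the nested cuffs $\gamma_n:=\gamma_n^{i_n}$ and let $f_m$ ``stretch'' the collar of each $\gamma_n$ by a small amount $\varepsilon_n$ and be the identity elsewhere; the $\varepsilon_n$ are chosen so that each $\varepsilon_n$ is far smaller than the width of the collar of $\gamma_n$, while $\sum_{n=a_m+1}^{a_{m+1}}\varepsilon_n\ge\varepsilon_0$ for a fixed $\varepsilon_0>0$ — this is possible precisely because $\sum_n\exp(-\pi^2/(2p_n))=\infty$ forces the collar widths to sum to infinity over the run. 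Since the collars are disjoint and nested inside a common annular neighbourhood, their moduli add, so the total modulus of this chain, hence of the family $\Gamma_m$ of arcs crossing all of them, changes by a definite factor $1+c$ with $c=c(\varepsilon_0)>0$. In case (I) one makes the analogous, one–index version of this, using the very rapid growth of $\ell_{X_{E(\omega)}}(\gamma_n)$ forced by the $q_n\log\log(1/q_{n+1})\to\infty$ hypothesis.

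The lower bound $d_T(p_m,p_0)\ge\tfrac12\log(1+c)$ then follows from Gr\"otzsch's inequality: any quasiconformal map homotopic to $f_m$ \emph{rel the ideal boundary} must send $\Gamma_m$ to a family whose modulus differs from $\mathrm{mod}(\Gamma_m)$ by the factor above, so its maximal dilatation is at least $1+c$. It is essential here that the homotopies fix $\partial X_{E(\omega)}$, since this is what pins down the arc system $\Gamma_m$; this is exactly the feature separating $T(X)$ from the reduced Teichm\"uller space, and it is why the analogous surfaces can behave differently there. For the upper bound $d_L(p_m,p_0)\to 0$ I would show that every non-peripheral simple closed curve $\alpha$ changes hyperbolic length by a factor $1+o(1)$, uniformly in $\alpha$: $\alpha$ only feels the collars it actually crosses, the extra length it can gain from stretching those collars is at most the sum of the corresponding $\varepsilon_n$, and, since $\varepsilon_n$ is dominated by the width of the $n$-th collar and $\ell_{X_{E(\omega)}}(\alpha)$ is at least the sum of the widths of the collars it crosses, this forces a ratio $1+o(1)$; the cuffs $\gamma_n$ themselves, being long, also change length only by a vanishing ratio. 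Taking the supremum over $\alpha$ gives $d_L(p_m,p_0)\to 0$, and combining with the lower bound completes the proof in both cases.

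The step I expect to be the main obstacle is this last uniform estimate — simultaneously controlling the length distortion of all simple closed curves, including those winding deeply through the modified block — together with the sharp two–sided modulus estimates of the preliminary step, since the calibration of the $\varepsilon_n$ (small enough to keep $d_L$ from seeing them, with a divergent sum so that $d_T$ does) rests entirely on that calibration being numerically available, which is exactly what conditions (I) and (II) are engineered to guarantee.
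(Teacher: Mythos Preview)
Your approach diverges from the paper's and, as written, has a genuine gap in the $d_T$ lower bound.

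The paper does not construct the deformations $f_m$ by hand. It invokes a criterion proved in the author's earlier work (stated here as Theorem~\ref{keythm}): if there is a sequence of simple closed geodesics $\alpha_n$ such that every simple closed geodesic $\beta_n$ meeting $\alpha_n$ satisfies $\ell(\beta_n)/\bigl(\sharp(\alpha_n\cap\beta_n)\,\ell(\alpha_n)\bigr)\to\infty$, then $d_T$ and $d_L$ are topologically inequivalent. The mechanism behind that criterion is a (partial) \emph{Dehn twist} along a single $\alpha_n$, not a stretch spread over many collars: a twist leaves $\ell(\alpha_n)$ unchanged and alters any crossing $\beta$ by an additive amount controlled by $\sharp(\alpha_n\cap\beta)\,\ell(\alpha_n)$, giving $d_L\to 0$, while it has definite extremal dilatation in the collar of $\alpha_n$, giving the $d_T$ lower bound. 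The proof of the theorem then consists only of verifying this ratio hypothesis for $\alpha_n=[\gamma_n^1]$ in case~(I) and $\alpha_m=[\gamma_{a_m}^1]$ in case~(II), using the length estimates of Lemmas~\ref{upper}--\ref{anno}. In case~(II) the point is precisely that any $\beta_m$ crossing $[\gamma_{a_m}^1]$ must traverse a run of collars whose \emph{hyperbolic widths} sum to infinity (Lemma~\ref{anno}), so $\ell(\beta_m)\to\infty$ while $\ell([\gamma_{a_m}^1])\le U(d)$ stays bounded.

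The problem with your Gr\"otzsch step is this. You want the modulus of the arc family $\Gamma_m$ crossing the whole chain of collars to change by a definite factor $1+c$. But the conformal modulus of the collar of $\gamma_n$ is $M_n\asymp 1/\ell(\gamma_n)\asymp p_n$, and since $\exp(-\pi^2/(2p_n))\ll p_n$, condition~(3) forces $\sum_{a_m<n<a_{m+1}} M_n\to\infty$ as well. Your $\varepsilon_n$ are chosen with $\sum\varepsilon_n\ge\varepsilon_0$ fixed; whatever normalization you use, the additive change in the chain modulus is a bounded $\varepsilon_0$ against a base $\sum M_n\to\infty$, so the multiplicative change is $1+o(1)$ and Gr\"otzsch yields only $K\ge 1+o(1)$. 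A stretch distributed over many thin collars, each deformed infinitesimally, does not move any single conformal invariant by a definite factor --- which is exactly why the paper's construction twists along \emph{one} curve rather than stretching many. (Incidentally, as the paper notes in its Remark, $X_{E(\omega)}$ is uniformized by a Fuchsian group of the first kind, so the ideal boundary is empty and your rel--$\partial X$ clause is not doing any work.)
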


 \begin{example}
 Let $\omega = \{ q_n\}_{n=1}^{\infty}$ be a sequence satisfying $q_{n+1}=1/\exp (\exp (n/ q_n))$, then $\omega  = \{ q_n\}_{n=1}^{\infty}$ satisfies the condition (I) of Theorem \ref{thm1}. Indeed, 
 \[ q_n \log(\log (1/q_{n+1})) = q_n \cdot (n/ q_n) =n \to \infty.\]
 The sequences satisfying the condition (I) is decreasing very rapidly.
 \end{example}
  
 \begin{example}
 For the sequences $\displaystyle \left\{ p_n = \frac{\pi^2}{2 \log n} \mid n \in \mathbb{N} \right\}$ and $\mathcal{A}=\{a_m \}_{m=1}^{\infty}$ satisfying $a_{m+1} = 2^m a_m$ and $a_1 =1$, define $\omega =\{ q_n\}_{n=1}^{\infty}$ as
 \[
  q_n =
 \begin{cases}
 \frac{1}{2} & (n \in \mathcal{A})\\
 p_n  & (\text{otherwise}).
 \end{cases}
 \]
 Then $\omega  = \{ q_n\}_{n=1}^{\infty}$ satisfies the condition (II) of Theorem \ref{thm1}. Indeed, 
 \[  \exp \left(\frac{-\pi^2}{2 \cdot p_n}\right) = \frac{1}{n},\]
 and
 \begin{eqnarray*}
 \sum_{n=a_m +1}^{a_{m+1}}  \exp \left(\frac{-\pi^2}{2 \cdot p_n}\right) 
 &=& \sum_{n=a_m +1}^{a_{m+1}} \frac{1}{n}\\
&=&  \sum_{n=a_m +1}^{2 a_m} \frac{1}{n} + \sum_{n=2a_m +1}^{2^2 a_m} \frac{1}{n}
  + \cdots + \sum_{n=2^{m-1} a_m +1}^{2^m a_m} \frac{1}{n}\\
 &>& a_m \cdot \frac{1}{2a_m} + 2a_m \cdot \frac{1}{2^2 a_m} + \cdots + 2^{m-1} a_m \cdot \frac{1}{2^m a_m}\\
 &=& \frac{1}{2} m \to \infty (m \to \infty).
  \end{eqnarray*}
  The sequences satisfying the condition (II) is decreasing slowly.
  \end{example}

 In this paper, we show lemmas to prove Theorem \ref{thm1} in Section 2, and prove Theorem \ref{thm1} in Section 3.
 
 \begin{remark}
In Section 2, we reveal that if $\omega = \{ q_n\}_{n=1}^{\infty}$ satisfies (I), then $X_{E(\omega)}$ is very similar to Shiga's example (\cite{Shiga1}, \S 3). However there are differences between them: 
\begin{enumerate}
\item He constructed the Riemann surface by gluing pairs of pants together. In his way, he can assign an arbitrary length to each geodesic of the boundary of each pair of pants. On the other hand, in this paper, we construct it by a generalized Cantor set $\omega  = \{ q_n\}_{n=1}^{\infty}$. In general, $q_n$ does not determine the length of each geodesic of the boundary of $n$-th pair of pants, which depends on not only $q_n$ but also $q_{\ell} \in \omega \setminus \{ q_n \}$. (However, if $\omega$ satisfies some conditions, then it depends only on $q_n$ (Remark \ref{explain}).) Lemmas  \ref{upper} and \ref{lower} in this paper are convenient propositions as the basis relating two kinds of Riemann surfaces. 

\item (Significant difference.) His Riemann surface does not agree with its convex core, that is, the corresponding Fuchsian group is of the second kind. To be a little more specific, in the case where a Riemann surface is constructed by gluing pairs of pants together, the corresponding Fuchsian group can be of the second kind if lengths of boundaries of pairs of pants increase very rapidly. Therefore he considered the two metrics on reduced Teichm\"uller spaces, and if someone would like to consider (not reduced) Teichm\"uller space for Shiga's Riemann surface, some device is needed. Matsuzaki  (\cite{Matsuzaki}) showed that Shiga's Riemann surface can be reconstructed into a Riemann surface corresponding to the Fuchsian group of the first kind by twist boundaries of pairs of pants when they are glued together. On the other hand, for our Riemann surface $X_{E(\omega)}$, no device is needed, because $X_{E(\omega)}$ is symmetric about $\mathbb{R} \cup \{ \infty \}$ (without twist) and corresponds to the Fuchsian group of the first kind for any $\omega = \{ q_n\}_{n=1}^{\infty}$.
\end{enumerate}
\end{remark}

 
\section{Lemmas to prove Theorem \ref{thm1}}

At the beginning, we decompose $X_{E(\omega)}$ by pants for an arbitrary $E(\omega)$: for any $k \in \mathbb{N}$ and $E_k= \bigcup_{i \in \mathcal{I}_k} I_k^i$, let $\{ \gamma_k^i \}_{i \in \mathcal{I}_k}$ be a family of disjoint simple closed curves in $\hat{\mathbb{C}}$ such that for each $i \in \mathcal{I}_k$, $\gamma_k^i$ separates $I_k^i$ and $\{I_k^{i'}\}_{i' \in \mathcal{I}_k \setminus \{i \}}$. (See Figure \ref{pb-x}.) Note that $\displaystyle \{ \gamma_k^i \mid i \in \mathcal{I}_k, k \in \mathbb{N}\}$ is regarded as a family of simple closed curves in $X_{E(\omega)}$. Also, $ \gamma_1^1$ and $ \gamma_1^2$ are homotopic, so we put $\gamma_1:=[\gamma_1^1] = [\gamma_1^2]$. 
 
 \begin{figure}[h]
\centering
\includegraphics[width=10cm,bb=0 0 681 664]{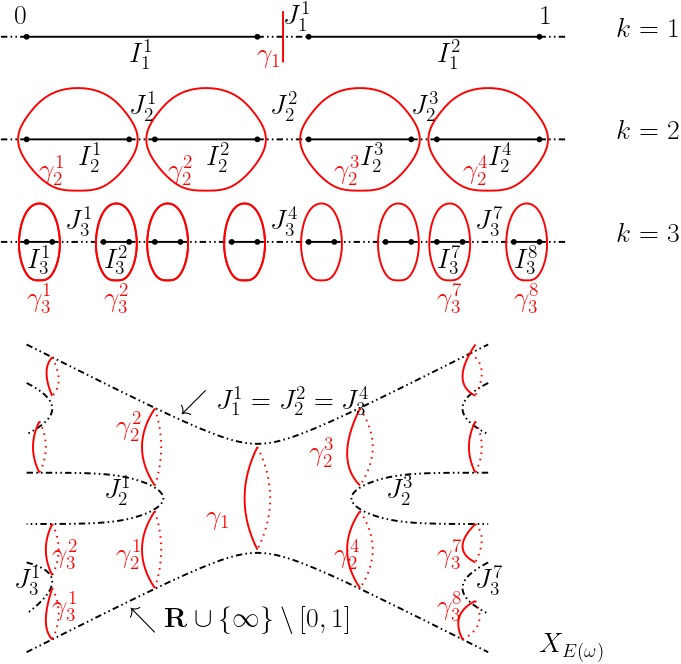}\\
\caption{$\displaystyle \{ \gamma_k^i \mid i \in \mathcal{I}_k\}$ $(k \le 3)$}
\label{pb-x}
\end{figure}   

 Let $P_1^1$ and $P_1^2$ be pairs of pants bounded by $\gamma_1$, $[\gamma_2^1]$, $[\gamma_2^2]$ and $\gamma_1$, $[\gamma_2^3]$, $[\gamma_2^4]$, respectively. And also, for any $k \ge 2$ and $i \in \mathcal{I}_k$, let $P_k^i$ be a pair of pants bounded by geodesics $[\gamma_k^i]$, $[\gamma_{k+1}^{2i -1}]$, $[\gamma_{k+1}^{2i}]$. Then $X_{E(\omega)}$ is decomposed by pants
 $\bigcup_{k=1}^{\infty} (\bigcup_{i \in \mathcal{I}_k} P_k^i)$.
 
Let us estimate lengths of geodesics $\displaystyle \{ [\gamma_k^i] \mid i \in \mathcal{I}_k, k \in \mathbb{N}\}$ in $X_{E(\omega)}$. To prove the following lemmas, we name the intervals: for each $k$ and each $j \in \mathcal{J}_k :=\{ 1,2,3,...,2^{k}-1 \}$, the $j$-th open interval from the left in $I \setminus E_k$ is denoted by $J_k^j$ (Figure \ref{pb-x}) and put $J_k^0 =J_k^{2^k}:= \mathbb{R} \cup \{ \infty\}\setminus I$. Then, for example, $J_1^1=J_2^2 = J_3^{2^2}=\cdots = J_k^{2^{k-1}} =\cdots$. In general, for any $k \in \mathbb{N}$ and any odd number $m \in \mathcal{J}_k$, $J_{k}^{m} = J_{k+1}^{2 m} = J_{k+2}^{2^2 m}=\cdots = J_{k + \ell}^{2^{\ell} m}=\cdots.$ 

Also, put

\[ U(x) :=\frac{2 \pi^2}{\log \frac{1+x}{1-x}} \left( =\frac{\pi^2}{\tanh^{-1} x} \right). \]

 \begin{lemma}  \label{upper}
 Let $\displaystyle \{ [\gamma_k^i] \mid i \in \mathcal{I}_k, k \in \mathbb{N}\}$ be closed geodesics in $X_{E(\omega)}$ as above.
 \begin{enumerate}
 \item If $i=1$ or $2^k$, then $\displaystyle \ell_{X_{E(\omega)}} ([\gamma_k^i]) < U(q_k)$ holds for any $k \in \mathbb{N}$. 
  
\item If $i \in \mathcal{I}_k \setminus \{ 1,2^k\}$, then
\[ \ell_{X_{E(\omega)}} ([\gamma_k^i]) < \max \left\{U(q_k) , \frac{2 \pi^2}{\log \frac{1-q_k + 2 q_{k-\ell}}{1-q_k}} \right\}, \]
where $\ell \in \{1,2,...,k-1 \}$ satisfies $i=2^{\ell} m$ or $i=2^{\ell}m +1$ for some odd number $m$. 
\end{enumerate}
    \end{lemma}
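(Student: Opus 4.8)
The plan is to estimate the hyperbolic length of the geodesic $[\gamma_k^i]$ by constructing an explicit conformal embedding of an annular region around $\gamma_k^i$ into $X_{E(\omega)}$ and then using the monotonicity of extremal length (equivalently, the fact that $\ell_{X}([\gamma]) \le \pi^2/\mathrm{mod}(A)$ whenever $A$ is an embedded annulus in $X$ whose core curve is freely homotopic to $[\gamma]$). Concretely, for a fixed $k$ and $i$, the curve $\gamma_k^i$ separates the single interval $I_k^i$ from the rest of $E(\omega)$. I would look at the complementary gaps of $E_k$ immediately to the left and to the right of $I_k^i$: these are intervals $J_k^{j}$ and $J_k^{j+1}$ (with $j=i-1$) whose lengths are controlled by the $q_n$'s. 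Since $E(\omega)\subset E_k$, the Riemann surface $X_{E(\omega)}$ contains $\hat{\mathbb C}\setminus(I_k^i\cup(\text{everything outside those two gaps}))$, so it contains a round annulus (or a quadrilateral/annulus obtained by a Möbius normalization) separating $I_k^i$ from the two outer components. The modulus of that annulus is computed from the cross-ratio of the four relevant endpoints, which is where the function $U$ and the quantity $\log\frac{1+x}{1-x}$ enter: a symmetric configuration of intervals of ratio $q_k$ gives an annulus of modulus $\tfrac{1}{\pi}\log\frac{1+q_k}{1-q_k}$ up to the usual Grötzsch/Teichmüller normalization, yielding the bound $U(q_k)$.

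The key steps, in order, would be: (1) Recall the comparison inequality: if $A\hookrightarrow X$ is an embedded annulus homotopic to a simple closed geodesic $[\gamma]$, then $\ell_X([\gamma])\le \pi^2/\mathrm{mod}(A)$ (this follows from Maskit's inequality / the collar lemma, or directly from extremal length since the extremal length of the homotopy class is $\le 1/\mathrm{mod}(A)$ and is $\ge \ell_X([\gamma])/\pi$ up to the standard constant; I would cite the precise normalization giving the constant $\pi^2$). (2) For $i=1$ (the case $i=2^k$ is symmetric by the reflection fixing $\mathbb R\cup\{\infty\}$), observe that the gap to the left of $I_k^1$ is $J_k^0=\mathbb R\cup\{\infty\}\setminus I$, an unbounded interval, and the gap to the right is $J_k^1$; normalize by a Möbius transformation so that $I_k^1$ and the union of the remaining intervals become two symmetric intervals, and read off that the separating annulus has modulus at least $\tfrac1\pi\log\frac{1+q_k}{1-q_k}$, because $|J_k^1| = q_k|I_{k-1}^1|$ while $|I_k^1|=\tfrac{1-q_k}{2}|I_{k-1}^1|$, so the relevant ratio is exactly $q_k$. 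This gives part (1). (3) For interior $i$, i.e. $i\in\mathcal I_k\setminus\{1,2^k\}$, both adjacent gaps $J_k^{i-1}$ and $J_k^{i}$ are bounded; write $i=2^\ell m$ or $2^\ell m+1$ with $m$ odd, and use the identification $J_k^{2^\ell m}=J_{k-\ell}^m$ from the paragraph before the lemma to see that one of the two adjacent gaps has length $q_k|I_{k-1}^1|$ (the "new" gap at level $k$) while the other has length $q_{k-\ell}|I_{k-\ell-1}^1|$ (an "old", typically larger, gap inherited from an earlier level). The worst case for the modulus is when the annulus must squeeze past the larger old gap; computing the modulus of the annulus separating $I_k^i$ from the nearer side when the far gap is enlarged leads to the second term $\tfrac{2\pi^2}{\log\frac{1-q_k+2q_{k-\ell}}{1-q_k}}$, and taking the better of the two embedded annuli gives the max in the statement.

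The main obstacle I anticipate is getting the modulus estimate (and hence the constant) genuinely correct in the interior case: one has to choose the right embedded annulus, carry out the Möbius normalization of the four endpoints of the two relevant gaps and the endpoints of $I_k^i$, and verify that after normalization the configuration is dominated (in the sense of nesting of annuli, so that moduli compare the right way) by a standard symmetric configuration whose modulus is explicitly $\tfrac1\pi\log$ of the displayed ratio. The bookkeeping of which intervals have which lengths—tracking that $|I_n^i|=\prod_{j\le n}\tfrac{1-q_j}{2}$ and that the gap $J_k^{i-1}$ or $J_k^{i}$ adjacent to $I_k^i$ has length $q_{k-\ell}|I_{k-\ell-1}^1| = q_{k-\ell}\cdot\tfrac{2}{1-q_{k-\ell}}|I_{k-\ell}^1|$, and then expressing everything relative to $|I_{k-1}^1|$—is the part most prone to error, but it is essentially routine once the right annulus is identified. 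I would also need the elementary monotonicity of $x\mapsto\log\frac{1+x}{1-x}$ and the fact that enlarging a complementary gap only increases the modulus of the separating annulus, so that replacing the true (smaller, because $E(\omega)\subsetneq E_k$) surface by the level-$k$ model only decreases the length, keeping all inequalities strict.
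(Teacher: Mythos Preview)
Your strategy---embed an annulus around $I_k^i$ in $X_{E(\omega)}$ and bound $\ell_{X_{E(\omega)}}([\gamma_k^i])$ by the hyperbolic length of its core curve---is exactly the paper's. The paper's execution, however, is more concrete than what you sketch: no M\"obius normalization or cross-ratio computation is needed. One simply takes the round annulus centered at the midpoint $x_k^i$ of $I_k^i$ with inner radius just above $\tfrac12|I_k^i|$ and outer radius just above $\tfrac12(|I_k^i|+|J|)$, where $J$ is the \emph{shorter} of the two adjacent gaps $J_k^{i-1},J_k^i$; this annulus lies in $X_{E(\omega)}$, and the Schwarz lemma gives $\ell_{X_{E(\omega)}}([\gamma_k^i])\le 2\pi^2/\log\bigl((|I_k^i|+|J|)/|I_k^i|\bigr)$, which is then evaluated using $|I_k^i|=\tfrac12(1-q_k)|I_{k-1}^1|$ and the gap lengths (\ref{2-2}), (\ref{2-3}).

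Your step (3) is muddled on precisely the point that makes the $\max$ appear. The annulus is constrained by the \emph{smaller} adjacent gap (a larger gap gives more room, hence larger modulus, hence a \emph{better} bound---the opposite of ``worst case''), and you do not ``take the better of two embedded annuli'': that would produce a $\min$, not a $\max$. What actually happens is that exactly one of $J_k^{i-1},J_k^i$ is the new level-$k$ gap of length $q_k|I_{k-1}^1|$ and the other is an older level-$(k-\ell)$ gap of length $q_{k-\ell}|I_{k-\ell-1}^1|$; since one does not know a priori which is shorter, the single annulus one builds yields either $U(q_k)$ or the second displayed quantity depending on the case, and the $\max$ merely covers both. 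For $i=1$ or $2^k$ one adjacent gap is the unbounded $J_k^0=J_k^{2^k}$, so the shorter gap is automatically the level-$k$ one and the bound is $U(q_k)$. (Also fix your constants: the round annulus $\{r<|z|<R\}$ has modulus $\tfrac{1}{2\pi}\log(R/r)$ and core-curve hyperbolic length $2\pi^2/\log(R/r)$, not $\pi^2/\mathrm{mod}$ with $\mathrm{mod}=\tfrac1\pi\log(\cdot)$.)
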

   
\begin{proof}
Note that for any $i \in \mathcal{I}_k$,
\begin{equation}\label{2-1}
|I_k^i| =\frac{1}{2} (1-q_k)|I_{k-1}^1|.
\end{equation}
Also, if $i \in \mathcal{J}_k$ is odd, then
\begin{equation}\label{2-2}
|J_k^i| = q_k |I_{k-1}^1|, 
\end{equation}
and if $i \in \mathcal{J}_k$ is even, then there exist $\ell \in \{1,2,...,k-1\}$ and an odd number $m$ such that $i=2^{\ell} m$, so 
\begin{equation}\label{2-3}
|J_k^i| = |J_k^{2^{\ell} m}| = |J_{k-\ell}^m| = q_{k- \ell} |I_{k-\ell -1}^1|.
\end{equation}

Now, let $i$ be an arbitrary number in $\mathcal{I}_k$. Firstly, we consider the case where $|J_k^{i-1}| > |J_k^i|$. For the midpoint $x_k^i$ of $I_k^i$ and a sufficiently small number $\varepsilon > 0$, take the annulus

\[ A_k^i := \{z \in \mathbb{C} \mid \frac{1}{2}|I_k^i|(1+\varepsilon)< |z-x_k^i| < \frac{1}{2}(|I_k^i| + |J_k^i|) (1+\varepsilon )\}.\] 

\begin{figure}[h]
\centering
\includegraphics[width=11cm,bb=0 0 799 312]{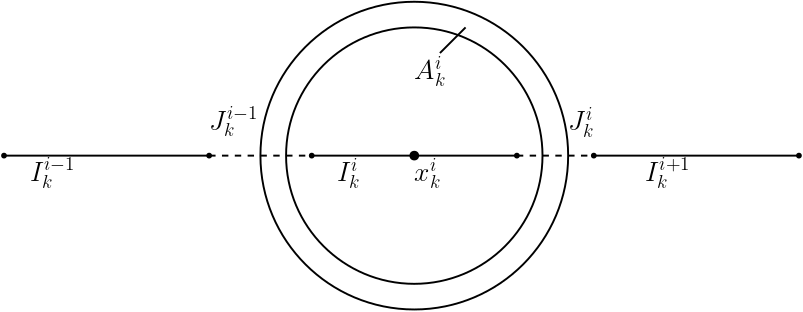}\\
\caption{Intervals $J_k^{i-1}$, $I_k^i$, $J_k^i$ and an annulus $A_k^i$.}
\label{annu-k-i}
\end{figure}

\textbf{The case where $i$ is odd.} By (\ref{2-1}) and (\ref{2-2}), the ratio $R_k^i$ of the radii of boundary circles of $A_k^i$ is
\begin{eqnarray*}
R_k^i=\frac{\frac{1}{2}|I_k^i|(1+\varepsilon) }{ \frac{1}{2}(|I_k^i| + |J_k^i|) (1+\varepsilon )} 
&=& \frac{|I_k^i|}{|I_k^i| + |J_k^i|} \\
&=& \frac{\frac{1}{2} (1-q_k)|I_{k-1}^1| }{\frac{1}{2}(1-q_k)|I_{k-1}^1| + q_k |I_{k-1}^1|} \\
&=&  \frac{\frac{1}{2} (1-q_k)}{\frac{1}{2}(1-q_k) + q_k} \\
&=& \frac{1-q_k}{1+q_k}.
\end{eqnarray*}
Hence, the length of the core curve $c_k^i$ in $A_k^i$ is $\displaystyle \frac{-2 \pi^2}{\log R_k^i}$ (cf. the proof of Theorem III in \cite{Shiga2}). Therefore 
\[ \ell_{X_{E(\omega)}} ([\gamma_k^i]) \le \ell_{X_{E(\omega)}} (c_k^i) \le \ell_{A_k^i} (c_k^i) = \frac{-2 \pi^2}{\log R_k^i} =  \frac{2 \pi^2}{\log (1/R_k^i)} = \frac{2 \pi^2}{\log \frac{1+q_k}{1-q_k}} . \]
In particular, if $i=1$, the inequality holds.

\textbf{The case where $i$ is even.} For $i$, let $\ell$ be the natural number satisfying $i=2^{\ell} m$ for some odd number $m$. Then $ |I_{k-\ell -1}^1| >  |I_{k -1}^1|$ holds by the definition of intervals $\{I_{k}^1 \}$. Hence, by (\ref{2-1}) and (\ref{2-3}), the ratio $R_k^i$ of the radii of boundary circles of $A_k^i$ is
\begin{eqnarray*}
R_k^i=\frac{\frac{1}{2}|I_k^i|(1+\varepsilon) }{ \frac{1}{2}(|I_k^i| + |J_k^i|) (1+\varepsilon )} 
&=& \frac{|I_k^i|}{|I_k^i| + |J_k^i|} \\
&=& \frac{ \frac{1}{2} (1-q_k)|I_{k-1}^1| }{\frac{1}{2}(1-q_k)|I_{k-1}^1| + q_{k-\ell} |I_{k-\ell -1}^1|} \\
&<& \frac{ \frac{1}{2} (1-q_k)|I_{k-1}^1| }{\frac{1}{2}(1-q_k)|I_{k-1}^1| + q_{k-\ell} |I_{k-1}^1|} \\
&=&  \frac{1-q_k}{1-q_k + 2 q_{k-\ell}}.
\end{eqnarray*}
Similarly as in the case where $i$ is odd,
\[ \ell_{X_{E(\omega)}} ([\gamma_k^i]) \le \frac{-2 \pi^2}{\log R_k^i} = \frac{2 \pi^2}{\log (1/R_k^i)} <  \frac{2 \pi^2}{\log ((1-q_k + 2 q_{k-\ell})/(1-q_k))}. \]

Secondly, if $|J_k^{i-1}| \le |J_k^i|$, take the annulus $A_k^i =\{z \in \mathbb{C} \mid \frac{1}{2}|I_k^i|(1+\varepsilon)< |z-x_k^i| < \frac{1}{2}(|I_k^i| + |J_k^{i-1}|) (1+\varepsilon )\}$ for the midpoint $x_k^i$ of $I_k^i$ and have a similar argument. If $i$ is odd, the ratio $R_k ^i < (1-q_k)/(1-q_k + 2 q_{k-\ell})$, where $\ell \in \{1,2,...,k-1 \}$ satisfies $i-1=2^{\ell} m$ for some odd number $m$. If $i$ is even, then the ratio $R_k^i  < (1-q_k)/(1+q_k)$. In particular, if $i= 2^k$, the inequality holds. 
\end{proof}   

\begin{remark}
By Lemma \ref{upper}, if $q_k \to 1$, then $\ell_{X_{E(\omega)}} ([\gamma_k^i]) \to 0$  as $k \to \infty$ $(i \in \mathcal{I}_k)$.
\end{remark}

\begin{remark} \label{explain}
To explain Lemma \ref{upper} more precisely, if $\omega =\{ q_n \}_{n=1}^{\infty}$ is monotonic decreasing, then $\ell_{X_{E(\omega)}}([\gamma_k^i]) <U(q_k)$ holds for any $k \in \mathbb{N}$ and $i \in \mathcal{I}_k$. Also, for an arbitrary $\omega =\{ q_n \}_{n=1}^{\infty}$, if $k$ satisfies that $q_k < q_{r}$ for any $r \in \{1,2,..., k-1\}$ (i.e. $\displaystyle \min_{1 \le r \le k} q_{r} =q_k$), then it holds for any $i \in \mathcal{I}_k$. Actually, from the proof, the inequality  
\begin{equation}\label{ineq}
\ell_{X_{E(\omega)}} ([\gamma_k^i]) \le  2\pi^2 / \log \left( (1-q_k + 2 q_{k-\ell})/ (1-q_k) \right)
\end{equation}
 holds if $k \in \mathbb{N}$ and $i \in \mathcal{I}_k \setminus \{1, 2^k\}$ satisfy
 \begin{equation}\label{ineq2}
 q_k > q_{k- \ell} \cdot 2^{\ell} \prod_{p=k - \ell}^{k-1} \frac{1}{1-q_p}, (q_k \ \text{is much larger than} \ q_{k-\ell},)
 \end{equation}
 where $\ell \in  \{1,2,...,k-1 \}$ satisfies $i=2^{\ell} m$ or $i=2^{\ell}m +1$ for some odd number $m$. Indeed, the inequality (\ref{ineq}) holds if either $|J_k^{i-1}| > |J_k^i|$ and $i$ is even or $|J_k^{i-1}| \le |J_k^i|$ and $i$ is odd. Now, $\displaystyle |I_k^i| =\frac{1}{2} (1-q_k)|I_{k-1}^1|=\left(\frac{1}{2}\right)^k \prod_{p=1}^k (1-q_p)$. Hence, by (\ref{2-2}) and (\ref{2-3}),  if $i=2^{\ell} m$, then 
\begin{eqnarray*}
\frac{|J_k^{i}|}{|J_k^{i-1}|}  
&=& \frac{q_{k- \ell} |I_{k-\ell-1}^1|}{q_{k} |I_{k-1}^1|} \\
&=& \frac{q_{k- \ell} \left(\frac{1}{2}\right)^{k-\ell-1} \prod_{p=1}^{k-\ell-1} (1-q_p)}{q_{k} \left(\frac{1}{2}\right)^{k-1} \prod_{p=1}^{k-1} (1-q_p)}\\
&=& \frac{q_{k- \ell}}{q_{k}} \cdot 2^{\ell} \cdot \prod_{p=k-\ell}^{k-1} \frac{1}{1-q_p},
\end{eqnarray*}
that is, $|J_k^{i-1}| > |J_k^i|$ means the inequality (\ref{ineq2}). Similarly, if $i=2^{\ell}m +1$, then $|J_k^{i-1}| \le |J_k^i|$ means the inequality (\ref{ineq2}). 

\end{remark}
Now, put
\[ L(x) := 2 \eta \left( \frac{2 \pi^2}{\log \frac{1+x}{2x}}\right), \]
where  $\eta(x)$ is the collar function: $\displaystyle \eta(x) = \sinh^{-1} \left(1/\sinh \frac{x}{2}\right)$.

\begin{lemma}\label{lower}
For each $k \in \mathbb{N}$ and each $i \in \mathcal{I}_k$, 
\[ \ell_{X_{E(\omega)}} ([\gamma_k^i]) > L(q_k) \]
holds. 
\end{lemma}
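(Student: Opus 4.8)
The plan is to establish the lower bound $\ell_{X_{E(\omega)}}([\gamma_k^i]) > L(q_k)$ by a modulus-of-annulus argument dual to the one used for Lemma~\ref{upper}: there the upper bound came from exhibiting an embedded round annulus \emph{whose core is homotopic to $\gamma_k^i$}, and here the lower bound will come from controlling how much conformal room there is for $\gamma_k^i$ in $X_{E(\omega)}$. Concretely, recall that if $\gamma$ is a simple closed geodesic on a hyperbolic Riemann surface $Y$ and $Y\setminus\gamma$ contains (or rather, $\gamma$ is the core of) an embedded topological annulus $B$ of modulus $\operatorname{mod}(B)$, then $\ell_Y(\gamma)\le \pi/\operatorname{mod}(B)$; conversely a \emph{small} hyperbolic length forces a \emph{large} modulus. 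So to get a lower bound on length I want an \emph{upper} bound on the modulus of \emph{every} annulus in $X_{E(\omega)}$ whose core is freely homotopic to $\gamma_k^i$ — equivalently, on the extremal length / modulus of that homotopy class in $\hat{\mathbb{C}}\setminus E(\omega)$.

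The key steps, in order. First, fix $k$ and $i\in\mathcal I_k$ and identify the homotopy class $[\gamma_k^i]$ in $\hat{\mathbb C}\setminus E(\omega)$: the curve $\gamma_k^i$ separates the point-set $I_k^i\cap E(\omega)$ (a ``sub-Cantor set'' contained in the interval $I_k^i$) from its complement $E(\omega)\setminus I_k^i$. An annulus $B\subset X_{E(\omega)}$ with core in this class has one complementary component containing $E(\omega)\cap I_k^i$ and the other containing $E(\omega)\setminus I_k^i$; in particular $B$ avoids both sets. Second, bound $\operatorname{mod}(B)$ from above. The cleanest route is the Teichm\"uller/Gr\"otzsch-type estimate: the extremal annulus separating a continuum through $\{0, x\}$ from a continuum through $\{1,\infty\}$ has modulus decreasing in $x$; since $E(\omega)\cap I_k^i$ is itself a generalized Cantor set, it contains the two endpoints of $I_k^i$ together with points of both halves $I_{k+1}^{2i-1}$, $I_{k+1}^{2i}$, and the complement $E(\omega)\setminus I_k^i$ contains points on both sides of $I_k^i$ at distance comparable to $|J_k^{i-1}|$ and $|J_k^i|$ from it. After normalizing $I_k^i$ to $[0,1]$ by an affine map (which does not change the modulus), the separated sets pin down four points whose cross-ratio is controlled by the ratio $\min\{|J_k^{i-1}|,|J_k^i|\}/|I_k^i|$; using \eqref{2-1}, \eqref{2-2}, \eqref{2-3} this ratio is bounded below by a quantity of the form $q_k/(1+q_k)$ (up to the factors $1/(1-q_p)$, which are $>1$ and hence help). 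Feeding this into the Teichm\"uller modulus bound and then into $\ell_Y(\gamma)\le \pi/\operatorname{mod}(B)$, rearranging, and comparing with the explicit $\log\frac{1+q_k}{2q_k}$ appearing in the definition of $U$ inside $L$, yields $\ell_{X_{E(\omega)}}([\gamma_k^i]) \ge 2\pi^2/\log\frac{1+q_k}{2q_k}$ — i.e.\ a \emph{lower} bound on the length by the argument of the annulus-modulus of $\gamma_k^i$ in $X_{E(\omega)}$. Third, convert this into the collar-function form: the collar lemma says a geodesic of length $t$ has an embedded collar of half-width $\eta(t)$, and the boundary curves of that collar (which are also homotopic to $\gamma_k^i$) then bound annuli that must also obey the modulus constraint, which sharpens $\ell$ to $>L(q_k) = 2\eta\bigl(2\pi^2/\log\frac{1+q_k}{2q_k}\bigr)$; alternatively one invokes the elementary fact that if $\gamma$ has an embedded collar of total width $w$ then $\ell(\gamma)>2\eta^{-1}$-type bounds, matching the stated $L$. (I expect the paper actually argues: the modulus bound gives a geodesic whose \emph{collar} is forced to be thin, hence the geodesic itself cannot be too short, which is exactly the $2\eta(\cdots)$ shape.)

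The main obstacle is the second step: pinning down a \emph{uniform} upper bound on the modulus of an arbitrary annulus in $\hat{\mathbb C}\setminus E(\omega)$ homotopic to $\gamma_k^i$, using only data about $q_k$ and not the whole tail $\{q_n\}$. The subtlety is that $E(\omega)\cap I_k^i$ could \emph{a priori} be very sparse inside $I_k^i$ if many subsequent $q_n$ are close to $1$, which would let an annulus hug $I_k^i$ closely and inflate the modulus; the point is that even then $E(\omega)\cap I_k^i$ still contains the two \emph{endpoints} of $I_k^i$ (they are never removed), and $E(\omega)\setminus I_k^i$ contains the endpoints of the adjacent intervals, so the four relevant points are determined purely by the first $k$ steps of the construction, i.e.\ by $q_1,\dots,q_k$, and the monotone dependence of the Teichm\"uller modulus on the configuration does the rest. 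Making this ``four endpoints suffice'' reduction precise — and checking that the resulting cross-ratio bound is clean enough to produce exactly $\log\frac{1+q_k}{2q_k}$ after the affine normalization and the $\prod 1/(1-q_p)\ge 1$ simplification — is where the real work lies; the collar-function bookkeeping in step three is then routine.
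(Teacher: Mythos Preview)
Your proposal contains a genuine error in step two, and step three cannot repair it. The quantity $2\pi^2/\log\frac{1+q_k}{2q_k}$ is \emph{not} a lower bound for $\ell_{X_{E(\omega)}}([\gamma_k^i])$: as $q_k\to 1$ this quantity tends to $+\infty$, whereas Lemma~\ref{upper} forces $\ell_{X_{E(\omega)}}([\gamma_k^i])\le U(q_k)\to 0$. What you have computed is (an upper bound for) the length of the \emph{wrong curve}. The round annulus with radius ratio $(1+q_k)/(2q_k)$ is the one centered on the gap $J_k^i$ (or $J_k^{i-1}$), and its core is not homotopic to $\gamma_k^i$; it separates the two endpoints of $J_k^i$ from the two endpoints of $J_k^{i-1}$, and therefore \emph{crosses} $[\gamma_k^i]$ twice. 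Your step three (``convert this into collar-function form'') is then impossible as stated: applying the collar lemma to $[\gamma_k^i]$ itself only tells you that curves crossing it are long, which does not sharpen a lower bound on $\ell([\gamma_k^i])$.

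The paper's proof runs precisely through this dual curve, with the collar lemma applied in the opposite direction from what you sketched. One first passes by Schwarz--Pick to the four-punctured sphere $X_k^i:=\hat{\mathbb C}\setminus\{\text{endpoints of }J_k^{i-1}\text{ and }J_k^i\}$, so that $\ell_{X_{E(\omega)}}([\gamma_k^i])\ge \ell_{X_k^i}(\alpha_k^i)$; this is your ``four points suffice'' reduction. Then the round annulus centered on $J_k^i$ gives an \emph{upper} bound $\ell_{X_k^i}(d_k^i)\le 2\pi^2/\log\frac{1+q_k}{2q_k}$ for the dual curve $d_k^i$; since $d_k^i$ and $\alpha_k^i$ meet twice in $X_k^i$, the collar lemma yields $\ell_{X_k^i}(\alpha_k^i)>2\eta\bigl(\ell_{X_k^i}(d_k^i)\bigr)\ge 2\eta\bigl(2\pi^2/\log\tfrac{1+q_k}{2q_k}\bigr)=L(q_k)$, using that $\eta$ is decreasing. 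That is exactly why $L$ has the shape $2\eta(\cdot)$. Your alternative route---bounding directly the extremal modulus of the homotopy class of $\gamma_k^i$---could in principle be made to work, but it requires the Teichm\"uller modulus function rather than an explicit round annulus, and would not naturally land on the stated bound $L(q_k)$.
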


\begin{proof}
For any $k \in \mathbb{N}$ and $i \in \mathcal{I}_k$, the geodesic $[\gamma_k^i]$ in $X_{E(\omega)}$ is regarded as a curve in $\hat{\mathbb{C}}$, and it intersects open intervals $J_k^{i-1}$ and $J_k^{i}$. (See Figure \ref{b-k-i}.) Let $X_k^i$ be a four-punctured sphere defined by removing endpoints of $J_k^{i-1}$ and ones of $J_k^{i}$ from $\hat{\mathbb{C}}$. The geodesic $[\gamma_k^i]$ in $X_{E(\omega)}$ is regarded as a curve $\alpha_k^i$ in $X_k^i$ by the inclusion map $\iota: X_{E(\omega)} \hookrightarrow X_k^i$, hence $\ell_{X_k^i} (\alpha_k^i) \le \ell_{X_{E(\omega)}} ([\gamma_k^i])$ holds, so it is enough to show that

\begin{equation}\label{2.6}
\ell_{X_k^i} (\alpha_k^i)  > L(q_k).
\end{equation}

Firstly, we consider the case where $i$ is odd. For the midpoint $y_k^i$ of $J_k^i$ and a sufficiently small number $\varepsilon > 0$, take the annulus
\[ B_k^i := \{ z \in \mathbb{C} \mid \frac{1}{2}|J_k^i|(1+\varepsilon)< |z-y_k^i| < \frac{1}{2}(|J_k^i| + |I_k^i|) (1+\varepsilon )\}. \]

\begin{figure}[h]
\centering
\includegraphics[width=11cm,bb=0 0 793 299]{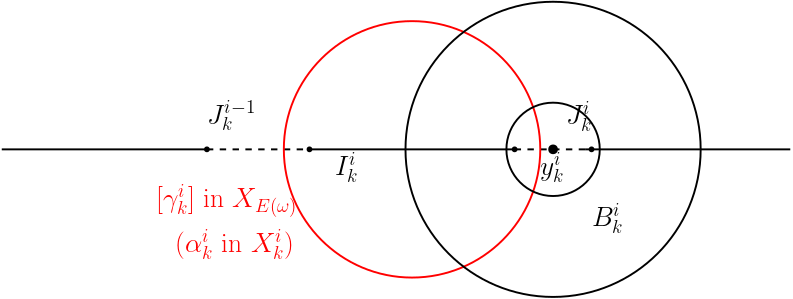}\\
\caption{$[\gamma_k^i]$ in $X_{E(\omega)}$ ($\alpha_k^i$ in $X_k^i$) and an annulus $B_k^i$.}
\label{b-k-i}
\end{figure}   

Then the ratio $S_k^i$ of the radii of boundary circles of $B_k^i$ is
\begin{eqnarray*}
S_k^i=\frac{\frac{1}{2}|J_k^i|(1+\varepsilon) }{ \frac{1}{2}(|J_k^i| + |I_k^i|) (1+\varepsilon )} 
&=& \frac{|J_k^i|}{|J_k^i| + |I_k^i|} \\
&=& \frac{q_k |I_{k-1}^1|}{q_k |I_{k-1}^1| + \frac{1}{2}(1-q_k)|I_{k-1}^1|} \\
&=& \frac{2q_k}{1+q_k}.
\end{eqnarray*}
Let $d_k^i$ be the core curve in $B_k^i$, then 
\[ \ell_{X_k^i} (d_k^i) \le  \ell_{B_k^i} (d_k^i) = \frac{-2 \pi^2}{\log S_k^i} =  \frac{2 \pi^2}{\log (1/S_k^i)} = \frac{2 \pi^2}{\log ((1+q_k)/2q_k)}.\]
Since the curve $d_k^i$ intersects $\alpha_k^i$ twice in $X_k^i$, we obtain the inequality (\ref{2.6}) by the collar lemma.

Secondly, suppose $i$ is even. For the midpoint $y_k^{i-1}$ of $J_k^{i-1}$ and a sufficiently small number $\varepsilon > 0$, take the annulus $B_k^{i-1} := \{ z \in \mathbb{C} \mid \frac{1}{2}|J_k^{i-1}|(1+\varepsilon)< |z-y_k^{i-1}| < \frac{1}{2}(|J_k^{i-1}| + |I_k^i|) (1+\varepsilon )\}$ and have a similar argument. Since $i-1$ is odd, the ratio $S_k^{i-1}$ of the radii of boundary circles of $B_k^{i-1}$ is $\displaystyle 2q_k/(1+q_k).$
\end{proof} 

\begin{remark}
By Lemma \ref{lower}, if $q_k \to 0$, $\ell_{X_{E(\omega)}} ([\gamma_k^i]) \to \infty$  $(k \to \infty)$ for any $i \in \mathcal{I}_k$.
\end{remark}

Next, we supplement the conditions of Theorem \ref{thm1}. In the following,  for functions $f(x),g(x)$, it is denoted $f(x) \sim g(x)$ ($x \to a)$ that $\displaystyle \lim_{x \to a} f(x)/g(x) =1$.

\begin{lemma}\label{con1}
Let $\omega = \{ q_n\}_{n=1}^{\infty}$ be a sequence satisfying the condition $\rm{(I)}$ of Theorem \ref{thm1}. Then, on $X_{E(\omega)}$, 
\[ \frac{\ell_{X_{E(\omega)}}([\gamma_{n+1}^{I}])}{\ell_{X_{E(\omega)}}([\gamma_n^i])} \to \infty \ (n \to \infty)\]
for any $i \in \mathcal{I}_n$ and any $I \in \mathcal{I}_{n+1}$.
\end{lemma}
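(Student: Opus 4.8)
The plan is to squeeze the ratio between the lower bound for $\ell_{X_{E(\omega)}}([\gamma_{n+1}^{I}])$ coming from Lemma~\ref{lower} and the upper bound for $\ell_{X_{E(\omega)}}([\gamma_n^i])$ coming from Lemma~\ref{upper}. Since $\omega$ is monotonic decreasing, Remark~\ref{explain} applies and gives $\ell_{X_{E(\omega)}}([\gamma_n^i]) < U(q_n)$ for \emph{every} $i \in \mathcal{I}_n$, while Lemma~\ref{lower} gives $\ell_{X_{E(\omega)}}([\gamma_{n+1}^{I}]) > L(q_{n+1})$ for every $I \in \mathcal{I}_{n+1}$. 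Hence it suffices to show the purely real-analytic statement
\[
\frac{L(q_{n+1})}{U(q_n)} \to \infty \qquad (n \to \infty),
\]
and this reduces the whole lemma to asymptotics of the functions $U$ and $L$ near $0$, driven by the growth hypothesis $q_n \log(\log(1/q_{n+1})) \to \infty$.

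The first step is to record the small-$x$ asymptotics of the two auxiliary functions. As $x \to 0^+$, $\tanh^{-1}x \sim x$, so $U(x) = \pi^2/\tanh^{-1}x \sim \pi^2/x$; in particular $U(q_n) = O(1/q_n)$ from above, say $U(q_n) \le C/q_n$ for large $n$. For $L$, note $\log\frac{1+x}{2x} = \log\frac{1}{x} + \log\frac{1+x}{2} \sim \log(1/x)$ as $x \to 0$, so the argument of $\eta$ behaves like $2\pi^2/\log(1/x)$, which tends to $0$; and for the collar function, $\eta(t) = \sinh^{-1}(1/\sinh(t/2)) \sim \log(1/t)$ as $t \to 0^+$ (since $\sinh(t/2)\sim t/2$ and $\sinh^{-1}(s) \sim \log(2s)$ for large $s$). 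Composing, $L(x) \sim 2\log\!\big(\tfrac{\log(1/x)}{2\pi^2}\cdot(1+o(1))\big) \sim 2\log\log(1/x)$ as $x \to 0$. So, with $x = q_{n+1}$, we get $L(q_{n+1})$ comparable to $\log\log(1/q_{n+1})$ up to universal multiplicative constants.

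The second step assembles these: for $n$ large,
\[
\frac{L(q_{n+1})}{U(q_n)} \;\ge\; c\,\frac{\log\log(1/q_{n+1})}{1/q_n} \;=\; c\, q_n \log\log(1/q_{n+1})
\]
for some constant $c>0$, and the right-hand side tends to $\infty$ precisely by hypothesis~(I). I should be a little careful about the direction of each inequality — I need a \emph{lower} bound for $L(q_{n+1})$ and an \emph{upper} bound for $U(q_n)$ — but both follow from the stated asymptotics once $n$ is large enough (using that $\log(1/q_{n+1})\to\infty$, so we are genuinely in the small-argument regime and can replace $\sim$ by two-sided constant bounds). Chaining with $\ell_{X_{E(\omega)}}([\gamma_{n+1}^{I}]) > L(q_{n+1})$ and $\ell_{X_{E(\omega)}}([\gamma_n^i]) < U(q_n)$ finishes the proof, uniformly in $i \in \mathcal{I}_n$ and $I \in \mathcal{I}_{n+1}$.

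The main obstacle is purely the bookkeeping of the asymptotic estimates for $\eta$ and for the logarithmic terms: one must verify that $\eta(2\pi^2/\log\frac{1+x}{2x})$ really is bounded below by a constant multiple of $\log\log(1/x)$ for all small $x$ (not just asymptotically), and that the $\log\frac{1+x}{2}$ correction and the constant $1/(2\pi^2)$ inside the double logarithm are harmless. None of this is deep, but it is the only place where care is needed; everything else is a direct application of Lemmas~\ref{upper} and~\ref{lower} together with Remark~\ref{explain}.
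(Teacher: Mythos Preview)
Your proof is correct and follows essentially the same route as the paper: bound the ratio below by $L(q_{n+1})/U(q_n)$ via Lemmas~\ref{upper}, \ref{lower} and Remark~\ref{explain}, then use the small-$x$ asymptotics $U(x)\sim \pi^2/x$ and $L(x)\sim 2\log\log(1/x)$ to reduce to the hypothesis $q_n\log\log(1/q_{n+1})\to\infty$. The paper's argument is the same, written with explicit chains of $\sim$ relations rather than your constant-bound formulation.
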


\begin{proof}
By Lemmas \ref{upper}, \ref{lower} and Remark \ref{explain},
\begin{eqnarray*}
\frac{\ell_{X_{E(\omega)}}([\gamma_{n+1}^{I}])}{\ell_{X_{E(\omega)}}([\gamma_n^i])}
&>& \frac{L(q_{n+1})}{U(q_n)}\\
&=& \frac{ \sinh^{-1}(1/\sinh(2\pi^2 / \log \frac{1+q_{n+1}}{2q_{n+1}}))}{2\pi^2/\log \frac{1+q_n}{1-q_n}}\\
&=& \frac{1}{2\pi^2} \cdot \sinh^{-1}(1/\sinh(2\pi^2 / \log \frac{1+q_{n+1}}{2q_{n+1}})) \cdot \log \frac{1+q_n}{1-q_n}.
\end{eqnarray*}
Here, $\sinh^{-1} x \sim \log x$ $(x \to \infty)$ holds and $\log ((1+x)/2x) \sim \log (1/x)$  $(x \to 0)$, $\sinh x \sim x$ $(x \to 0)$ hold. Hence, if $q_n \to 0$,
\begin{eqnarray*}
\sinh^{-1}(1/\sinh(2\pi^2 / \log \frac{1+q_{n+1}}{2q_{n+1}}))
&\sim& \log(1/\sinh(2\pi^2 / \log \frac{1+q_{n+1}}{2q_{n+1}}))\\
&=& - \log(\sinh(2\pi^2 / \log \frac{1+q_{n+1}}{2q_{n+1}}))\\
&\sim& - \log(\sinh(2\pi^2 / \log \frac{1}{q_{n+1}}))\\
&\sim& - \log(2\pi^2 / \log \frac{1}{q_{n+1}})\\
&=& -\log 2\pi + \log(\log (1/q_{n+1}))\\
&\sim & \log(\log (1/q_{n+1})).
\end{eqnarray*}
Also, $\log ((1+x)/(1-x))  = 2x +2x^3/3 + \cdots \sim 2x$ $(x \to 0)$. Therefore
\[\frac{\ell_{X_{E(\omega)}}([\gamma_{n+1}^{I}])}{\ell_{X_{E(\omega)}}([\gamma_n^i])} > \frac{L(q_{n+1})}{U(q_n)} \sim \frac{1}{2\pi^2} \cdot \log(\log (1/q_{n+1})) \cdot 2q_n \to \infty \ (n \to \infty).\]
\end{proof}

From Lemma \ref{con1}, we obtain the following fact about pairs of pants $\{P_k^i \}$ (Figure \ref{p-k-i}) in cases where   sequences satisfy the condition $\rm{(I)}$. 

\begin{figure}[h]
\centering
\includegraphics[width=4.5cm,bb=0 0 363 393]{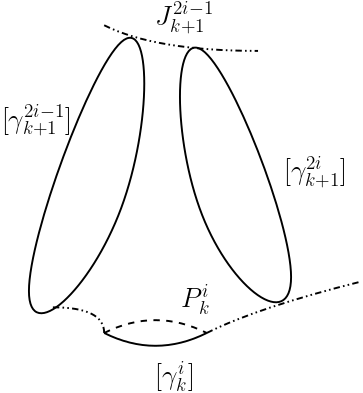}\\
\caption{A pair pf pants $P_k^i$.}
\label{p-k-i}
\end{figure}   

\begin{lemma}\label{pants1}
Let $\omega = \{ q_n\}_{n=1}^{\infty}$ be a sequence satisfying the condition $\rm{(I)}$ of Theorem \ref{thm1}. Then, for pairs of pants $\{P_k^i \}_{i \in \mathcal{I}_k, k \in \mathbb{N}}$, 
\[ \frac{d ([\gamma_k^i], J_{k+1}^{2i-1})}{{\ell_{X_{E(\omega)}}([\gamma_k^i])})} \to \infty \ (k \to \infty),\]
where $d(\cdot, \cdot)$ means the hyperbolic distance on $X_{E(\omega)}$.
\end{lemma}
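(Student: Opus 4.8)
The plan is to bound the numerator $d([\gamma_k^i], J_{k+1}^{2i-1})$ below by the width of a collar around a short geodesic, and to bound the denominator above by $U(q_k)$ (using Remark 2.5, since under condition (I) the sequence is monotone decreasing), so that the quotient is controlled by a ratio of the type already estimated in Lemma 2.7. First I would observe that the curve $[\gamma_{k+1}^{2i-1}]$ is one of the two "child" boundary geodesics of the pair of pants $P_k^i$, and the interval $J_{k+1}^{2i-1}$ sits "inside" $I_{k+1}^{2i-1}$, i.e. it is separated from $[\gamma_k^i]$ by the geodesic $[\gamma_{k+1}^{2i-1}]$ (or, in the degenerate indices, a closely related geodesic). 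Hence any path in $X_{E(\omega)}$ from $[\gamma_k^i]$ to $J_{k+1}^{2i-1}$ must cross $[\gamma_{k+1}^{2i-1}]$, and in fact must traverse a half-collar of $[\gamma_{k+1}^{2i-1}]$. Therefore
\[ d([\gamma_k^i], J_{k+1}^{2i-1}) \ge \eta\!\left(\ell_{X_{E(\omega)}}([\gamma_{k+1}^{2i-1}])\right), \]
where $\eta$ is the collar function appearing in the definition of $L(x)$.

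Next I would use Lemma 2.6 to get $\ell_{X_{E(\omega)}}([\gamma_{k+1}^{2i-1}]) > L(q_{k+1})$, and since $\eta$ is monotone decreasing, combined with the asymptotics already worked out in the proof of Lemma 2.7 this gives
\[ d([\gamma_k^i], J_{k+1}^{2i-1}) \ge \eta(\ell_{X_{E(\omega)}}([\gamma_{k+1}^{2i-1}])) \ge \tfrac12 L(q_{k+1}), \]
because $\eta$ evaluated at a length $< L(q_{k+1})$ is at least $\eta(L(q_{k+1})) = \tfrac12 L(q_{k+1})$ by the very definition $L(x) = 2\eta(2\pi^2/\log\frac{1+x}{2x})$ — the point being that $\eta$ is decreasing, so a shorter geodesic has a wider collar. (I should double check the precise relationship between $[\gamma_{k+1}^{2i-1}]$ and the interval $J_{k+1}^{2i-1}$ in the exceptional index cases $2i-1 = 1$ or adjacent to an endpoint interval, since there the relevant separating geodesic may be $\gamma_1$ or the curve around $J_{k+1}^{2i-1}$ itself; in all cases, though, the separating geodesic has length bounded above by some $U(q_r)$ with $r \ge k+1$ and hence, by monotonicity, length $\le U(q_{k+1})$, and its collar is correspondingly wide.)

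Combining with Remark 2.5, which gives $\ell_{X_{E(\omega)}}([\gamma_k^i]) < U(q_k)$ for monotone decreasing $\omega$, we get
\[ \frac{d([\gamma_k^i], J_{k+1}^{2i-1})}{\ell_{X_{E(\omega)}}([\gamma_k^i])} > \frac{\tfrac12 L(q_{k+1})}{U(q_k)}, \]
and the right-hand side is, up to the harmless factor $\tfrac12$, exactly the quantity shown to tend to $\infty$ in the proof of Lemma 2.7: indeed that proof established $L(q_{n+1})/U(q_n) \sim \frac{1}{2\pi^2}\log(\log(1/q_{n+1}))\cdot 2q_n \to \infty$ under condition (I). Hence the quotient tends to $\infty$ as $k \to \infty$. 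The main obstacle I anticipate is the geometric bookkeeping in the first step — verifying rigorously that a half-collar of $[\gamma_{k+1}^{2i-1}]$ genuinely separates $[\gamma_k^i]$ from $J_{k+1}^{2i-1}$ inside $X_{E(\omega)}$, uniformly over all the index cases — whereas the analytic estimate is essentially a reprise of Lemma 2.7.
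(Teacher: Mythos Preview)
There is a genuine gap, and it lies exactly where you feared --- the geometric first step. The interval $J_{k+1}^{2i-1}$ does \emph{not} sit inside $I_{k+1}^{2i-1}$: by construction $I_k^i = I_{k+1}^{2i-1}\cup J_{k+1}^{2i-1}\cup I_{k+1}^{2i}$, so $J_{k+1}^{2i-1}$ is the gap \emph{between} the two children and lies in the interior of the pair of pants $P_k^i$. Hence $[\gamma_{k+1}^{2i-1}]$ does not separate $[\gamma_k^i]$ from (the seam portion of) $J_{k+1}^{2i-1}$; a path can run from $[\gamma_k^i]$ straight into $P_k^i$ to the seam without crossing either child geodesic or its collar. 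Your bound $d([\gamma_k^i],J_{k+1}^{2i-1})\ge \eta(\ell_{X_{E(\omega)}}([\gamma_{k+1}^{2i-1}]))$ is therefore unjustified. Worse, under condition~(I) the child geodesics are \emph{long} (Lemma~\ref{lower} and the remark after it), so their collars are \emph{narrow}; even if the inequality held, it would give a lower bound tending to $0$.

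The subsequent manipulation is also in the wrong direction. From $\ell_{X_{E(\omega)}}([\gamma_{k+1}^{2i-1}])>L(q_{k+1})$ and the fact that $\eta$ is decreasing you obtain $\eta(\ell)<\eta(L(q_{k+1}))$, not $\ge$; and $\eta(L(x))\neq \tfrac12 L(x)$ (the definition gives $\tfrac12 L(x)=\eta\bigl(2\pi^2/\log\frac{1+x}{2x}\bigr)$, which is $\eta$ of the \emph{argument}, not of $L(x)$ itself). So the chain leading to $d\ge \tfrac12 L(q_{k+1})$ breaks at every link.

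The paper does not use the collar lemma here at all. It cuts $P_k^i$ along the real axis into two right-angled hexagons (the seam $J_{k+1}^{2i-1}\cap P_k^i$ being one common perpendicular), then cuts one hexagon into two right-angled pentagons. With $a_{k,i}$ a portion of $\tfrac12[\gamma_k^i]$, $b_{k,i}=d([\gamma_k^i],J_{k+1}^{2i-1})$, and $d_{k,i}=\tfrac12\ell_{X_{E(\omega)}}([\gamma_{k+1}^{2i-1}])$, the pentagon identity $\cosh d_{k,i}=\sinh a_{k,i}\sinh b_{k,i}$ holds. Since $a_{k,i}\asymp \ell_{X_{E(\omega)}}([\gamma_k^i])$ and, by Lemma~\ref{con1}, $d_{k,i}/a_{k,i}\to\infty$, taking logarithms gives $b_{k,i}/a_{k,i}\to\infty$, which is the claim. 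The key input is still Lemma~\ref{con1}, as you intuited, but the bridge to $d([\gamma_k^i],J_{k+1}^{2i-1})$ is hyperbolic trigonometry in $P_k^i$, not a separation/collar argument.
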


\begin{proof}
For an arbitrary $k \in \mathbb{N}$ and $i \in \mathcal{I}_k$, decompose a pair of pants $P_k^i$ into two symmetric right-hexagons. And furthermore, decompose one of them into two right-pentagons, where a half of $[\gamma_k^i]$ (which is an edge of the right-hexagon) is decomposed into edges of two distinct right-pentagons. Let $p_{k+1}^{2i-1}$, $p_{k+1}^{2i}$ be pentagons with an edge $(1/2) [\gamma_{k+1}^{2i-1}]$ and with an edge $(1/2) [\gamma_{k+1}^{2i}]$, respectively. Now, let $a_{k,i}$, $a'_{k,i}$ be lengths of an edge $[\gamma_{k}^{i}] \cap p_{k+1}^{2i-1}$ and an edge $[\gamma_{k}^{i}] \cap p_{k+1}^{2i}$, respectively. We assume that $a_{k,i} \ge a'_{k,i}$, then $(1/4)\ell_{X_{E(\omega)}} [\gamma_{k}^{i}] \le a_{k,i} \le (1/2)\ell_{X_{E(\omega)}} [\gamma_{k}^{i}]$. Also, put $b_{k,i} := d ([\gamma_k^i], J_{k+1}^{2i-1})$ and $d_{k,i} := (1/2) \ell_{X_{E(\omega)}} [\gamma_{k+1}^{2i-1}]$. Then, by the formula of right-pentagons (cf. \cite{Beardon}),
\[ \cosh d_{k,i} = \sinh a_{k,i} \sinh b_{k,i}.\]
Here, $\cosh x \sim e^x /2$ and $\sinh x \sim e^x /2$ $(x \to \infty)$, thus
\[2\exp (d_{k,i}) \sim  \exp (a_{k,i} + b_{k,i}) \ (k \to \infty). \]
Since $d_{k,i} / a_{k,i} \to \infty$ $(k \to \infty)$ by Lemma \ref{con1},
\[ 2^{1/a_{k,i}} \exp (d_{k,i} / a_{k,i}) \sim  \exp (1 + b_{k,i} /a_{k,i})  (k \to \infty). \]
Therefore
\begin{eqnarray*}
\exp\left( 1+ \frac{d ([\gamma_k^i], J_{k+1}^{2i-1})}{{(1/4)\ell_{X_{E(\omega)}}([\gamma_k^i])})} \right) 
&\ge& \exp \left( 1+ \frac{b_{k,i}}{a_{k,i}} \right) \sim \exp \left( \frac{d_{k,i}}{a_{k,i}}\right)\\
&\ge& \exp \left( \frac{\ell_{X_{E(\omega)}}([\gamma_{k+1}^{2i-1}])}{\ell_{X_{E(\omega)}}([\gamma_k^i])}\right).
\end{eqnarray*}
By Lemma \ref{con1}, the statement is followed. It is similar in the case where $a_{k,i} \le a'_{k,i}$. 
\end{proof}

Next, we consider the condition (II) of Theorem \ref{thm1}.
\begin{lemma}\label{anno}
Let $\{p_n\}_{n=1}^{\infty} \subset (0,1)$ and $\{ a_m\}_{m=1}^{\infty} \subset \mathbb{N}$ be sequences satisfying the conditions $\rm{(1)},\rm{(2)},\rm{(3)}$ of $\rm{(II)}$ of Theorem \ref{thm1}. Then
\[ \lim_{m \to \infty} \sum_{n=a_m +1}^{a_{m+1}} \eta (U(p_n)) = \infty\]
holds, where $\eta$ is the collar function.
\end{lemma}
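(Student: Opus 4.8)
The plan is to reduce the statement to hypothesis (3) of condition (II) by establishing the one‑variable asymptotic equivalence
\[ \eta(U(x)) \sim 2\exp\!\left(-\frac{\pi^2}{2x}\right) \qquad (x \to 0^+), \]
and then to compare the two sums term by term inside each block $\{a_m+1,\dots,a_{m+1}\}$. This is a purely real‑analytic statement; none of the hyperbolic geometry from the earlier lemmas is needed.

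First I would record the two elementary asymptotics behind the displayed equivalence. As $x\to 0^+$ one has $\tfrac12\log\frac{1+x}{1-x}=\tanh^{-1}x = x + x^3/3+\cdots$, so
\[ U(x) = \frac{\pi^2}{\tanh^{-1}x} = \frac{\pi^2}{x} + O(x) \longrightarrow \infty, \qquad \text{hence}\qquad \frac{U(x)}{2} = \frac{\pi^2}{2x} + O(x). \]
As $t\to\infty$ one has $\sinh(t/2)\sim e^{t/2}/2$, so $1/\sinh(t/2)\sim 2e^{-t/2}\to 0$, and since $\sinh^{-1}s\sim s$ as $s\to 0$ this gives $\eta(t)\sim 2e^{-t/2}$ as $t\to\infty$. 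Composing (legitimate because $U(x)\to\infty$) yields $\eta(U(x))\sim 2e^{-U(x)/2}$, and since $-U(x)/2 + \pi^2/(2x) = O(x)\to 0$ we get $e^{-U(x)/2}\sim e^{-\pi^2/(2x)}$, proving the displayed equivalence.

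Next I would feed in the sequences. By (1), $p_n\to 0$, so the equivalence applies with $x=p_n$: the ratio $\eta(U(p_n))\big/\bigl(2\exp(-\pi^2/(2p_n))\bigr)\to 1$ as $n\to\infty$, and in particular there is an $N$ with $\eta(U(p_n)) > \exp(-\pi^2/(2p_n))$ for all $n\ge N$. By (2) the increments $a_{m+1}-a_m$ are positive integers tending to $\infty$, so $a_m\to\infty$; choose $m_0$ with $a_{m_0}+1\ge N$. Then for every $m\ge m_0$ each index $n$ in the block $\{a_m+1,\dots,a_{m+1}\}$ satisfies $n\ge N$, so
\[ \sum_{n=a_m+1}^{a_{m+1}} \eta(U(p_n)) \;>\; \sum_{n=a_m+1}^{a_{m+1}} \exp\!\left(-\frac{\pi^2}{2p_n}\right). \]
Letting $m\to\infty$ and invoking (3) gives $\lim_{m\to\infty}\sum_{n=a_m+1}^{a_{m+1}}\eta(U(p_n)) = \infty$, which is the claim.

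The only point requiring any care is the chain of asymptotic equivalences in the middle step — in particular verifying that the $O(x)$ corrections coming from the series for $\tanh^{-1}x$ and from $\sinh^{-1}s\sim s$, $\sinh(t/2)\sim e^{t/2}/2$ all become negligible once $x\to 0^+$ — but this is routine and no genuine obstacle arises. (Note that condition (2) is used only to ensure $a_m\to\infty$, and monotonicity of $\{p_n\}$ in (1) is not needed beyond $p_n\to 0$.)
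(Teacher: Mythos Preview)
Your proposal is correct and follows essentially the same route as the paper's proof: both arguments establish the asymptotic $\eta(U(x)) \sim 2\exp(-\pi^2/(2x))$ as $x\to 0^+$ via the same elementary approximations for $\tanh^{-1}$, $\sinh$, and $\sinh^{-1}$, and then invoke hypothesis~(3). Your write-up is in fact more careful than the paper's, which stops at the asymptotic equivalence and leaves implicit the step you make explicit---namely, using $p_n\to 0$ from~(1) to get the termwise inequality $\eta(U(p_n)) > \exp(-\pi^2/(2p_n))$ for large $n$, and using~(2) to guarantee $a_m\to\infty$ so that every index in the block eventually satisfies this.
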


\begin{proof}
By the definitions of functions $\eta$ and $U$,
\begin{eqnarray*}
\eta (U(x)) = \sinh^{-1} \left( \frac{1}{\sinh \frac{\pi^2}{\log ((1+ x)/(1-x))}}\right).
\end{eqnarray*}
Also, $\displaystyle \sinh^{-1} x \sim x$ $(x \to 0)$,  $\displaystyle \sinh x \sim \frac{1}{2}\exp x$ $(x \to \infty)$ and $\displaystyle \log \frac{1+x}{1-x} = 2x +\frac{2x^3}{3} + \cdots$, hence 
\[ \eta(U(x)) \sim \frac{1}{\sinh \frac{\pi^2}{\log ((1+ x)/(1-x))}} \sim \frac{2}{\exp \frac{\pi^2}{\log ((1+ x)/(1-x))}}  \sim \frac{2}{\exp \frac{\pi^2}{2x}}  =2\exp \left( \frac{-\pi^2}{2x} \right) \]
$(x \to 0)$.
\end{proof}

Finally, we use the following to prove Theorem \ref{thm1}.

\begin{theorem}[K. (\cite{Kinjo1})] \label{keythm}
Suppose that for a hyperbolic Riemann surface $X$, there exists a family $\{ \alpha_n\}_{n=1}^{\infty} \subset \mathscr{C} (X)$ of simple closed geodesics such that for any geodesics $\{ \beta_n\}_{n=1}^{\infty} \subset \mathscr{C} (X)$ with $\alpha_n \cap \beta_n \neq \emptyset$ ($n=1,2,...$), 
\[ \lim_{n \to \infty} \frac{\ell_{X} (\beta_n)}{\sharp (\alpha_n \cap \beta_n)\ell_{X} (\alpha_n)} = \infty\]
holds. Then metrics $d_T$ and $d_L$ define different topologies.
\end{theorem}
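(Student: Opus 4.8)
The plan is to exploit the principle, recalled in the introduction, that $d_T$ and $d_L$ induce different topologies on $T(X)$ as soon as one exhibits a sequence $\{p_n\}\subset T(X)$ and a basepoint $p_0$ with $d_L(p_n,p_0)\to 0$ while $d_T(p_n,p_0)\ge c>0$. I would take $p_0=[X,\mathrm{id}]$ and build $p_n$ by a Fenchel--Nielsen twist along the given geodesic $\alpha_n$: let $p_n$ be the image of $p_0$ under twisting along $\alpha_n$ by a \emph{fixed} hyperbolic amount $t_0>0$ (to be chosen), that is, by $t_0/\ell_X(\alpha_n)$ full twists. Such a twist is supported in a collar of $\alpha_n$, fixes the ideal boundary, and is quasiconformal, so $p_n\in T(X)$ and $d_T(p_n,p_0)<\infty$. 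Writing $\ell_n:=\ell_X(\alpha_n)$ and denoting by $\ell_{p_n}(\beta)$ the hyperbolic length of the class of $\beta$ measured in the twisted structure, I would first record the reformulation of the hypothesis that is actually used: by a routine extraction argument, the assumption that $\ell_X(\beta_n)/(\sharp(\alpha_n\cap\beta_n)\,\ell_n)\to\infty$ for \emph{every} admissible sequence $\{\beta_n\}$ is equivalent to
\[ s_n:=\sup_{\beta\in\mathscr{C}(X),\ \alpha_n\cap\beta\neq\emptyset}\frac{\sharp(\alpha_n\cap\beta)\,\ell_n}{\ell_X(\beta)}\longrightarrow 0\qquad(n\to\infty). \]

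For the length spectrum I would use Wolpert's derivative bound for the twist flow (\cite{Wolpert}): along the twist, $\frac{d}{dt}\ell_\beta=\sum_{p\in\alpha_n\cap\beta}\cos\theta_p$, so $|\ell_{p_n}(\beta)-\ell_X(\beta)|\le t_0\,\sharp(\alpha_n\cap\beta)$ for every $\beta$, while lengths of curves disjoint from $\alpha_n$ are unchanged. Dividing by $\ell_X(\beta)$, the relative distortion of every curve is at most $t_0\cdot\sup_\beta \sharp(\alpha_n\cap\beta)/\ell_X(\beta)=t_0\,s_n/\ell_n$, so it suffices to prove $s_n/\ell_n=\sup_\beta \sharp(\alpha_n\cap\beta)/\ell_X(\beta)\to 0$. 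Here I would split into two regimes. If $\ell_n\to 0$, the collar lemma gives $\ell_X(\beta)\ge 2\,\sharp(\alpha_n\cap\beta)\,\eta(\ell_n)$ (with $\eta$ the collar function of the excerpt), whence $\sup_\beta \sharp/\ell_X(\beta)\le 1/(2\eta(\ell_n))\to 0$; if instead $\ell_n$ stays bounded below, then $s_n/\ell_n\le s_n/(\inf_n\ell_n)\to 0$ directly from the hypothesis. Running this dichotomy along an arbitrary subsequence yields $s_n/\ell_n\to 0$, hence $d_L(p_n,p_0)\le\log\frac{1}{1-t_0 s_n/\ell_n}\to 0$.

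For the Teichm\"uller metric I would show that a fixed twist always moves the conformal structure by a definite amount, uniformly in $n$. The key point is that a twist by the fixed hyperbolic amount $t_0$ corresponds, in the annular (collar) coordinate of $\alpha_n$, to a shear whose magnitude is comparable to $t_0$ \emph{independently of the modulus of the collar}: modeling the collar as a flat cylinder of modulus $m_n$ with $m_n\ell_n$ bounded above and below (uniformly for $\ell_n$ in a bounded range), the twist displaces each crossing arc horizontally by about $t_0/\ell_n$ across height $m_n$, a slope comparable to $t_0$. Thus the $\alpha_n$-annular coordinate moves a fixed hyperbolic distance of order $\sinh^{-1}(t_0/2)$ in its upper half-plane factor, and by a product-region estimate for the Teichm\"uller metric near a (moderately) short geodesic (Minsky's product regions theorem), localized to the embedded collar, this forces $d_T(p_n,p_0)\ge c(t_0)>0$ uniformly in $n$. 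Taking $t_0$ of order one then gives simultaneously $d_L(p_n,p_0)\to 0$ and $d_T(p_n,p_0)\ge c>0$, which is the conclusion of the theorem.

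The main obstacle is exactly this last, uniform lower bound on $d_T$. The naive route---applying Kerckhoff's formula to a single curve $\gamma_n$ crossing $\alpha_n$ once---succeeds only when the minimal such $\gamma_n$ is ``collar dominated,'' i.e. its extremal length is comparable to the collar modulus $m_n$; but in the regime where $\ell_n$ stays bounded and all transverse curves are long (precisely the situation the hypothesis permits), every crossing curve has large extremal length and the winding contribution of a fixed twist is a relatively small perturbation for any single test curve. Overcoming this forces one to use the genuinely two-dimensional (product-region, quadratic-differential) lower bound for the Teichm\"uller metric rather than a one-curve extremal-length comparison, and to establish it uniformly on an infinite-type surface---where the collar of $\alpha_n$ nonetheless embeds and the twist is compactly supported in it---which is the crux of the proof.
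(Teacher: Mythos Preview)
The paper does not contain a proof of this theorem: it is quoted from the author's earlier work \cite{Kinjo1} and used here as a black box. The only content in the present paper beyond the statement itself is the one-paragraph heuristic immediately following it, which says exactly what you propose---perform a Dehn twist $f$ along $\alpha_n$; the hypothesis forces $d_L([X,\mathrm{id}],[X,f])$ to be nearly zero while $d_T([X,\mathrm{id}],[X,f])$ stays bounded away from zero. So at the level of strategy your proposal agrees with the paper's informal explanation, and there is nothing further in this paper to compare against.

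As to the proposal itself: your $d_L\to 0$ argument via Wolpert's cosine formula and the subsequence dichotomy is sound. The genuine gap you yourself flag is the uniform lower bound on $d_T$. Your heuristic that the shear in the annular coordinate is ``comparable to $t_0$ independently of the modulus of the collar'' is not correct as stated across all regimes of $\ell_n$: the shear slope of a twist by hyperbolic amount $t_0$ across the standard collar is roughly $t_0/(2\eta(\ell_n))$, which degenerates as $\ell_n\to 0$. Moreover, Minsky's product-region theorem gives the comparison you want only in the thin part (short $\alpha_n$), precisely the regime where your shear estimate fails; when $\ell_n$ is of moderate size the product-region statement is not available in the form you invoke. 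The $d_T$ lower bound therefore still needs a real argument---either a direct extremal-length/quadratic-differential estimate valid on infinite-type surfaces, or the specific construction carried out in \cite{Kinjo1}. Your write-up is an accurate outline of where the work lies, but it is not yet a proof.
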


This theorem means the following: suppose that for a closed geodesic $\alpha$ in $X$, any closed geodesic $\beta$ crossing $\alpha$ is much longer than $\alpha$. Then a Dehn twist $f$ along $\alpha$ almost never changes lengths of any closed geodesics in $X$, but it changes conformal structure near $\alpha$, that is, the length spectrum distance $d_L([X,id],[X,f])$ is almost zero, but the Teichm\"uller distance $d_T([X,id],[X,f])$ is away from zero.


\section{Proof of Theorem \ref{thm1}}
\subsection{Condition (I)}
Let $\omega = \{ q_n\}_{n=1}^{\infty}$ be a sequence satisfying the condition $\rm{(I)}$ of Theorem \ref{thm1}. Consider a family $\{ [\gamma_n^i] \mid i=1, n \in \mathbb{N}\}$ of simple closed geodesics taken in Section 2. Then the geodesics $\{ [\gamma_{n}^1] \}_{n=1}^{\infty}$ satisfies the condition of Theorem \ref{keythm}. Indeed, for any $n \in \mathbb{N}$, let $\beta_n$ be an arbitrary simple closed geodesic crossing $[\gamma_n^1]$. Then there exists $k \in \mathbb{N}$ such taht $\beta_n \cap J_{n + k}^{i} \neq \emptyset$ for some odd number $i$. If $\sharp ([\gamma_n^1] \cap \beta_n) =N \in \mathbb{N}$, then there exist $M \ge (1/2)N$ such numbers $k_1,...,k_{M}$. Put $\displaystyle k := \min \{k_1,...,k_{M}\}$. Since $\beta_n$ cross $[\gamma_{n+k-1}^{*}]$ and $J_{n + k}^{i} $, $\ell_{X(E(\omega))} (\beta_n) > M d([\gamma_{n+k-1}^{*}], J_{n + k}^{i} )$. Also $\ell_{X(E(\omega))} ([\gamma_n^1]) \le \ell_{X(E(\omega))} ([\gamma_{n+k-1}^1])$. Therefore
\[ \frac{\ell_{X(E(\omega))} (\beta_n)}{\sharp ([\gamma_n^1] \cap \beta_n)\ell_{X(E(\omega))} ([\gamma_n^1])} \ge  \frac{(1/2)N d([\gamma_{n+k-1}^{*}], J_{n + k}^{i} )}{N \ell_{X(E(\omega))} (\gamma_{n+k-1}^1)} \to \infty \ (n \to \infty)\]
by Lemma \ref{pants1}.
\subsection{Condition (II)}
Let $\omega = \{ q_n\}_{n=1}^{\infty} \subset (0,1)$ be a sequence with sequences  $\{p_n\}_{n=1}^{\infty} \subset (0,1)$, $\mathcal{A}=\{ a_m\}_{m=1}^{\infty} \subset \mathbb{N}$ and a constant $d \in (0,1)$ satisfying the condition (II) of Theorem \ref{thm1}. From geodesics $\{ [\gamma_n^i]\}_{i \in \mathcal{I}_k, k \in \mathbb{N}}$ taken in Section 2, choose a family of simple closed geodesics such that $n=a_m$ ($m=1,2,...$) and $i=1$. It is enough to show that the geodesics $\{ [\gamma_{a_m}^1] \}_{m=1}^{\infty}$ satisfies the condition of Theorem \ref{keythm}. To be more specific, let $\beta_m$ be an arbitrary simple closed geodesic crossing $[\gamma_{a_m}^1]$ $(m=1,2,...)$ and we shall show that $\ell_{X_{E(\omega)}} (\beta_m)\to \infty$ as $m \to \infty$. Note that $\ell_{X_{E(\omega)}} ([\gamma_{a_m}^1]) \le U(d)$ by Lemma \ref{upper}, and if $n \notin \mathcal{A}$, then $\ell_{X_{E(\omega)}} ([\gamma_{n}^i]) \to \infty$ as $q_n \to 0$ for any $i \in \mathcal{I}_n$ by Lemma \ref{lower}. Also, note that  $X_{E(\omega)}$ and each geodesic of $\{ [\gamma_n^i]\}_{i \in \mathcal{I}_k, k \in \mathbb{N}}$ are symmetric about $\mathbb{R} \cup \{ \infty\}$ by the definitions.

\begin{figure}[h]
\centering
\includegraphics[width=12.2cm,bb=0 0 883 414]{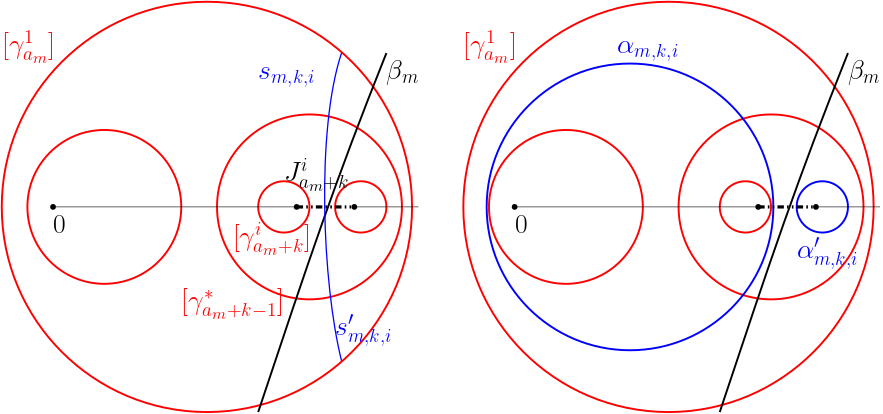}\\
\caption{$[\gamma_{a_m}^1$],  $\beta_m$, $J_{a_m +k}^{i}$ and $\alpha_{m,k,i}$, etc.}
\label{s-alpha}
\end{figure}

Now, for any $\beta_m$, there exist $k \in \mathbb{N}$ such $\beta_m \cap J_{a_m + k}^{i} \neq \emptyset$ for some odd number $i$. If $k$ satisfies that $a_m +k \in \mathcal{A}$, then $a_m + k \ge a_{m+1}$, so $\beta_m$ crosses closed geodesics $\{ [\gamma_n^{\ast}] \mid a_m < n < a_{m+1}\}$. If $m$ is sufficiently large, then each $q_n =p_n$ $(a_m < n < a_{m+1})$ is smaller than $q_r$ for any $r \in \{1,2,...,n-1\}$, hence $\ell_{X(E(\omega))} ([\gamma_n^i]) < U(q_n)$ for any $i \in \mathcal{I}_n$ (by Remark \ref{explain}). Therefore $\displaystyle \ell_{X_{E(\omega)}} (\beta_m) >  \sum_{n=a_m}^{a_{m+1}-1} \eta (U(q_n)) \to \infty$ $(m \to \infty)$ by Lemma \ref{anno}.

 In the following, suppose that $k$ satisfies that $a_m + k \notin \mathcal{A}$. Let $s_{m, k, i}$ be the shortest geodesic segment from $[\gamma_{a_m}^1]$ to $J_{a_m+k}^{i}$ and $s'_{m,k,i}$ be the geodesic segment given by reflecting $s_{m,k,i}$ across $\mathbb{R} \cup \{ \infty\}$. Then the connected segment $S_{m,k,i} :=s_{m,k,i} \cdot s'_{m,k,i}$ divides $[\gamma_{a_m}^1]$ into two geodesic segments. Regard $[\gamma_{a_m}^1] \cup S_{m,k,i}$ as two closed curves (with the intersection $S_{m,k,i}$) and take the two simple closed geodesics $\alpha_{m,k,i}, \alpha'_{m,k,i}$ which are homotopic to them respectively, where $\alpha_{m,k,i} \cap (\mathbb{R} \cup \{\infty \} \setminus [0,1])  \neq \emptyset$. (See the right of Figure \ref{s-alpha}.) 

\begin{clm} \label{m,k,i}
\[ \ell_{X(E(\omega))} (\alpha_{m,k,i}) > 2 \eta \left( \frac{2 \pi^2}{\log \frac{1+q_{a_m + k }}{2q_{a_m + k}}}\right),\ \]
where  $\eta(x)$ is the collar function. In particular, as $m \to \infty$, $\ell_{X(E(\omega))} (\alpha_{m,k,i}) \to \infty$. 
\end{clm}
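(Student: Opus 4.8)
The plan is to transcribe the mechanism of Lemma \ref{lower}: produce an annulus around $J_{a_m+k}^i$ whose core curve is short, observe that $\alpha_{m,k,i}$ must cross it, and apply the collar lemma. First I would set up the right four-punctured sphere. Write $J_{a_m+k}^i=(p,q)$ with $0<p<q<1$. Since $p,q$ are endpoints of level-$(a_m+k)$ intervals they survive all later steps of the construction and hence lie in $\bigcap_n E_n=E(\omega)$, and likewise $0,1\in E(\omega)$. Thus $Y:=\hat{\mathbb{C}}\setminus\{0,p,q,1\}$ is a (hyperbolic) four-times punctured sphere with $X_{E(\omega)}\subset Y$, so by the Schwarz lemma $\ell_{X_{E(\omega)}}(\alpha_{m,k,i})\ge \ell_{Y}([\iota(\alpha_{m,k,i})])$, where $\iota\colon X_{E(\omega)}\hookrightarrow Y$ is the inclusion and $[\,\cdot\,]$ the $Y$-geodesic representative. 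It then suffices to bound the $Y$-length from below.

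Next I would introduce the annulus exactly as in the ``$i$ odd'' case of Lemma \ref{lower}: for the midpoint $y$ of $J_{a_m+k}^i$ and small $\varepsilon>0$, set $B:=\{z\mid \tfrac12|J_{a_m+k}^i|(1+\varepsilon)<|z-y|<\tfrac12(|J_{a_m+k}^i|+|I_{a_m+k}^i|)(1+\varepsilon)\}$. For $\varepsilon$ small and $m$ large this is embedded in $Y$ (its inner disc contains $p,q$, and its closure misses $0$ and $1$ because $J_{a_m+k}^i$ and its flanking intervals sit strictly inside $(0,1)$). Using \eqref{2-1} and \eqref{2-2}, since $i$ is odd the ratio of the radii of the boundary circles of $B$ equals $2q_{a_m+k}/(1+q_{a_m+k})$, so its core curve $d$ satisfies $\ell_Y([d])\le \ell_B(d)=2\pi^2/\log\frac{1+q_{a_m+k}}{2q_{a_m+k}}$, and in $Y$ the geodesic $[d]$ separates $\{p,q\}$ from $\{0,1\}$.

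Then I would pin down $\alpha_{m,k,i}$ up to isotopy in $Y$. By construction it meets $\mathbb{R}\cup\{\infty\}$ in exactly two points: one point of $J_{a_m+k}^i=(p,q)$, coming from the segment $S_{m,k,i}=s_{m,k,i}\cdot s'_{m,k,i}$, and one point of $\mathbb{R}\cup\{\infty\}\setminus[0,1]$, coming from the chosen arc of $[\gamma_{a_m}^1]$ (this is the defining property distinguishing $\alpha_{m,k,i}$ from $\alpha'_{m,k,i}$). Reading off the cyclic order $0,p,q,1$ of the punctures, $\alpha_{m,k,i}$ therefore separates $\{q,1\}$ from $\{0,p\}$ in $Y$. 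This pairing differs from the pairing $\{p,q\}\mid\{0,1\}$ realized by $[d]$, and in a four-punctured sphere two essential simple closed curves inducing different pairings of the punctures have positive geometric intersection number (in fact $2$); hence the geodesics $\alpha_{m,k,i}$ and $[d]$ cross. Consequently $\alpha_{m,k,i}$ traverses the embedded collar of $[d]$ in $Y$, which has width $2\eta(\ell_Y([d]))$, so $\ell_Y([\iota(\alpha_{m,k,i})])\ge 2\eta(\ell_Y([d]))\ge 2\eta\!\left(2\pi^2/\log\frac{1+q_{a_m+k}}{2q_{a_m+k}}\right)$ (using that $\eta$ is decreasing), which together with the Schwarz estimate yields the asserted inequality. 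For the ``in particular'' part, $a_m+k\notin\mathcal{A}$ gives $q_{a_m+k}=p_{a_m+k}$, and since $\{p_n\}$ decreases to $0$ and $a_m+k\ge a_m+1\to\infty$, we get $q_{a_m+k}\le p_{a_m+1}\to 0$; hence $2\pi^2/\log\frac{1+q_{a_m+k}}{2q_{a_m+k}}\to 0$, and as $\eta(x)\to\infty$ when $x\to 0^+$, the lower bound tends to $\infty$ (uniformly in $k$), so $\ell_{X_{E(\omega)}}(\alpha_{m,k,i})\to\infty$.

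The one genuinely new point, and the step I expect to need the most care, is the middle one: correctly identifying the isotopy class of $\alpha_{m,k,i}$ in $Y$ — i.e. verifying that the curve really meets $\mathbb{R}\cup\{\infty\}$ in precisely those two places, so that it pairs the punctures $\{0,p,q,1\}$ differently from $[d]$ and is thereby forced to cross $[d]$. Once that is in hand, everything else is a direct copy of the annulus-plus-collar-lemma computation of Lemma \ref{lower}.
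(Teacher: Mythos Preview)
Your proposal is correct and follows essentially the same route as the paper's proof: pass to the four-punctured sphere $\hat{\mathbb{C}}\setminus\{0,p,q,1\}$, build the same round annulus about $J_{a_m+k}^i$, bound the core curve by $2\pi^2/\log\frac{1+q_{a_m+k}}{2q_{a_m+k}}$, apply the collar lemma, and finish with the Schwarz lemma. The only difference is that you spell out why $\alpha_{m,k,i}$ must intersect the core curve (via the puncture-pairing argument), a point the paper simply asserts; your identification of the tricky step is exactly right, and the argument you give for it is sound.
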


\begin{proof}
Similarly as in the proof of Lemma \ref{lower}, let $X_{m,k,i}$ be a four-punctured Riemann surface given by removing two endpoints of $J_{a_m + k }^{i}$ and  $0,1$ from $\hat{\mathbb{C}}$. Let $y_{m,k,i}$ be the midpoint of $J_{a_m + k}^{i}$, and for a sufficiently small number $\varepsilon > 0$, take the annulus
\[ B_{m,k,i} := \{ z \in \mathbb{C} \mid \frac{1}{2}|J_{a_m + k}^{i}|(1+\varepsilon)< |z-y_{m,k,i}| < \frac{1}{2}(|J_{a_m + k }^{i}| + |I_{a_m + k}^{i}|) (1+\varepsilon )\}. \]
Then the ratio $S_{m,k,i}$ of the radii of boundary circles of $B_{m,k,i}$ is
\begin{eqnarray*}
S_{m,k,i}&=&\frac{\frac{1}{2}|J_{a_m + k}^{i}|(1+\varepsilon) }{ \frac{1}{2}(|J_{a_m + k}^{i}| + |I_{a_m + k}^{i}|) (1+\varepsilon )} \\
&=& \frac{q_{a_m +k} |I_{a_m + k-1}^{1}|}{q_{a_m +k} |I_{a_m + k-1}^{1}| + \frac{1}{2}(1-q_{a_m +k})|I_{a_m + k-1}^{1}|} \\
&=& \frac{2q_{a_m +k}}{1+q_{a_m +k}}.
\end{eqnarray*}

Therefore the core curve $\delta_{m,k,i}$ in $B_{m,k,i}$ satisfies 
\[ \ell_{X_{m,k,i}} (\delta_{m,k,i}) \le  \ell_{B_{m,k,i}} (\delta_{m,k,i}) =  \frac{2 \pi^2}{\log (1/S_{m,k,i})} = \frac{2 \pi^2}{\log ((1+q_{a_m +k})/2q_{a_m +k})}.\]

By the collar lemma, $ \ell_{X_{m,k,i}} (\alpha_{m,k,i}) > 2 \eta ( \ell_{X_{m,k,i}} (d_{m,k,i}) )$ holds, and by Schwarz lemma, the desired inequality is verified.
\end{proof}

Consider a pair of pants bounded by $\alpha_{m,k,i}, \alpha'_{m,k,i}$ and $[\gamma_{a_m}^1]$ and divide it into two symmetric right-hexagons. Note that the pants is symmetric about $\mathbb{R} \cup \{ \infty\}$ by the definition, so the dividing geodesic segments are included in $\mathbb{R} \cup \{ \infty\}$, in particular, the segment $\sigma_{m,k,i}$ connecting $\alpha_{m,k,i}$ and $\alpha'_{m,k,i}$ is included in $J_{a_m + k}^{i}$. Divide one of right-hexagons into two right-pentagons and put $a_{m,k,i} := d([\gamma_{a_m}^1], \alpha_{m,k,i})$, $b_{m,k,i} :=(1/2)\ell_{X_{E(\omega)}} (\alpha_{m,k,i})$  and $d_{m,k,i} := d(\sigma_{m,k,i}, [\gamma_{a_m}^1])$. Then, by the formula of right-pentagons (cf. \cite{Beardon}),

\[ \cosh (d_{m,k,i}) = \sinh (a_{m,k,i}) \sinh (b_{m,k,i}).\]

By Lemma \ref{upper} and the collar lemma, $a_{m,k,i} > \eta(U(d)) >0$, and by Claim \ref{m,k,i}, $b_{m,k,i} \to \infty$ $(m \to \infty)$, therefore $d_{m,k,i} \to \infty$ $(m \to \infty)$, that is, $\ell_{X_{E(\omega)}} (\beta_m)\to \infty$.

\bibliographystyle{amsplain}

\begin{thebibliography}{99}

\bibitem{Beardon} A. F. Beardon, The geometry of discrete groups, Springer-Verlag, New York, 1983.

\bibitem{FST} E. Fujikawa, H. Shiga and M. Taniguchi, On the action of the mapping class group for Riemann surfaces of infinite type, J. Math. Soc. Japan Vol. \textbf{56}, No.4, (2004), 1069--1086.

\bibitem{Kinjo1} E. Kinjo, On Teichm\"uller metric and the length spectrums of topologically infinite Riemann surfaces, Kodai Math. J. \textbf{34} (2011), 179--190.

\bibitem{Kinjo2} E. Kinjo, On the length spectrum metric in infinite dimensional Teichm\"uller spaces, Ann. Acad. Sci. Fenn. Math. Vol. \textbf{39} (2014), 349--360.

\bibitem{Kinjo3} E. Kinjo, On the length spectrum Teichm\"uller spaces of Riemann surfaces of infinite type, Conform. Geom. Dyn. \textbf{22} (2018), 1-14.

\bibitem{Liu} L. Liu, On the length spectrums of non-compact Riemann surfaces, Ann. Acad. Sci. Fenn. Math. \textbf{24} (1999), 11--22. 

\bibitem{Liu-Sun-Wei} L. Liu, Z. Sun and H. Wei,
Topological equivalence of metrics in \mbox{Teichm\"uller} space, Ann. Acad. Sci. Fenn. Math. Vol. \textbf{33} (2008), 159--170.

\bibitem{MacManus} P. MacManus, Catching sets with quasicircles, Rev. Mat. Iberoam. \textbf{15} (1999), no.2,  267--277.

\bibitem{Matsuzaki} K. Matsuzaki, A countable Teichm\"uller modular group, Trans. Amer. Math. Soc. \textbf{357} (2005), 3119--3131.

\bibitem{Shiga1} H. Shiga, On a distance by the length spectrum on Teichm\"uller space, Ann. Acad. Sci. Fenn. Series A, I Math. \textbf{28} (2003), 315--326.

\bibitem{Shiga2} H. Shiga, On the quasiconformal equivalence of dynamical Cantor sets, J. d’Analyse Math. \textbf{147} (2022), 1-28.

\bibitem{Shiga3} H. Shiga, On the moduli space of the standard Cantor set,  https://arxiv.org/pdf/2102.03792.pdf

\bibitem{Sorvali} T. Sorvali, The boundary mapping induced by an isomorphism of covering groups, Ann. Acad. Sci. Fenn. Series A, I Math. \textbf{526} (1972), 1--31.

\bibitem{Wolpert} S. Wolpert, The length spectrum as moduli for compact Riemann surfaces, Ann. of Math. 109 (1979), 323-351.

\end{thebibliography}

\end{document}